\newtheorem{theorem}{Theorem}
\newtheorem*{theorem*}{Theorem}
\newtheorem*{corollary*}{Corollary}
\newtheorem{proposition}{Proposition}
\newtheorem*{claim*}{\sc Claim}
\theoremstyle{definition}
\newtheorem{definition}{\sc Definition}
\newtheorem*{definition*}{\sc Definition}
\newtheorem*{subclasses}{\sc Subclasses}
\newtheorem{remark}{\bf Remark}
\newtheorem*{remark*}{\bf Remark}
\newtheorem*{example*}{\bf Example}
\newcommand{\loc}{{\rm loc}}
\newcommand{\supp}{{\rm sprt\,}}
\newcommand{\Real}{{\rm Re}}
\newcommand{\ord}{{\rm ord}}
\newcommand{\Ord}{{\rm Ord\,}}
\newcommand{\dotminus}{\mathbin{\text{\@dotminus}}}
\newcommand{\@dotminus}{%
	\ooalign{\hidewidth\raise1ex\hbox{.}\hidewidth\cr$\m@th-$\cr}%
}
\begin{document}
	
	\title[Vanishing of Green's function]{On the vanishing of Green's function, desingularization and Carleman's method}
	
	\author{Ryan Gibara} 
	
	\address{Universit\'{e} Laval, D\'{e}partement de math\'{e}matiques et de statistique, 1045 av.\,de la M\'{e}decine, Qu\'{e}bec, QC, G1V 0A6, Canada}

\curraddr{\sc Department of Mathematical Sciences, P.O. Box 210025, University of
	Cincinnati, Cincinnati, OH 45221--0025, U.S.A.}

\email{ryan.gibara@gmail.com}

	\author{Damir Kinzebulatov}

\address{Universit\'{e} Laval, D\'{e}partement de math\'{e}matiques et de statistique, 1045 av.\,de la M\'{e}decine, Qu\'{e}bec, QC, G1V 0A6, Canada}

\email{damir.kinzebulatov@mat.ulaval.ca}

	\begin{abstract}

	The subject of the present paper is the phenomenon of vanishing of the Green function of the operator $-\Delta + V$ on $\mathbb R^3$ at the points where a potential $V$ has positive critical singularities. More precisely, imposing minimal assumptions on $V$ (i.e.\,the form-boundedness), 
	we obtain an upper bound on the order of vanishing of the Green function. As a by-product of our proof, we improve the existing results on the strong unique continuation for eigenfunctions of $-\Delta + V$ in dimension $d=3$.
	\end{abstract}

\keywords{Schr\"{o}dinger operators, singular potentials, desingularization, Carleman's method}

\thanks{The research of D.K.\,is supported by the Natural Sciences and Engineering Research Council 
of Canada (grant RGPIN-2017-05567)}

	\maketitle

	
	\section{Introduction}

	Our motivation for this work goes back to a result of Milman-Sem\"{e}nov \cite{MS0}-\cite{MS2} 
	on a sharp two-sided bound on the heat kernel of the Schr\"{o}dinger operator with positive inverse-square potential in $\mathbb R^d$, $d \geq 3$,
	\begin{equation}
		\label{H}
		H=-\Delta + \delta\frac{(d-2)^2}{4}|x|^{-2}, \quad \delta>0.
	\end{equation}
	They noticed that the semigroup $e^{-tH}$ remains ultracontractive even when considered in the space $L^1(\mathbb R^d,\varphi dx)$ with the vanishing weight $\varphi(x)=|x|^{\beta}\wedge 1$, $\beta=\frac{d-2}{2}(\sqrt{1+\delta}-1)$. In fact, the generator $\varphi H \varphi^{-1}$ of the weighted semigroup $\varphi e^{-tH}\varphi^{-1}$ becomes ``desingularized'' in the context of Nash's method. This observation allowed them to establish a non-Gaussian two-sided bound on the heat kernel of $H$ and, hence the following two-sided bound on the  Green function: 
	\begin{equation}
		\label{green}
		(\mu+H)^{-1}(x,y) \simeq e^{-c\sqrt{\mu}|x-y|}|x-y|^{-d+2} \biggl[1 \wedge \frac{|x||y|}{|x-y|^2} \biggr]^{\beta}.
	\end{equation}
It shows that the singularity of the potential is so strong that it makes the Green function  $x \mapsto (\mu+H)^{-1}(x,y)$ vanish to order $\beta$ at $x=0$ (say, $y \in B^c(0,1)$). 

Generally speaking, the vanishing of the Green function at a point manifests the presence of a critical singularity of the potential.
The relationship between the order of vanishing and the magnitude of the singularity is the subject of this work.

The desingularization procedure allows for the exact calculation of the order of vanishing of the Green function, but it depends on the explicit form of the potential: $V(x)=c\Delta |x|^\beta/|x|^\beta$, where $|x|^\beta$ is the Lyapunov function of $H$ (that is, $H|x|^\beta=0$). 
In the present paper we venture to the other endpoint of the range of possible results: we consider an arbitrary potential $V \in L^1_{\loc}$ in a wide class of locally unbounded potentials for which the self-adjoint operator $$H=-\Delta + V$$  can be defined (i.e.\,the form-bounded potentials, see Definition \ref{def_V}, the sum is in the sense of quadratic forms), and obtain an upper bound on the order of vanishing of the Green function. In such generality, the desingularization method, even if carried out for some Lyapunov function $\psi$, i.e.\,$H\psi=0$, would give little information: one ends up with an essentially equally difficult problem of estimating the order of vanishing of $\psi$. Thus, another approach is needed.

It will be convenient to estimate the order of vanishing of $u=(\mu+H)^{-1}f$, for $f$ identically zero in $B(0,1)$, rather than that of $x \mapsto (\mu+H)^{-1}(x,y)$, $y \in B^c(0,1)$. 
The main result of this paper, stated briefly, is as follows. 

\begin{theorem*} Let $d=3$.
Let $V$ be a form-bounded potential with a sufficiently small form-bound, $H=-\Delta + V$, and let $u:=(\mu+H)^{-1}f$ for $f$ identically zero in $B(0,1)$. If $u$ vanishes in $L^p$ at $x=0$ to order at least $\beta>0$, see definition below (with $\beta$, in some sense, substantial), then
			\begin{equation}
			\label{bd__}
			\||x|^{-[\beta]-1} u \|_{L^p(B(0,1))} \leq K
		\end{equation}
		where $[\beta]$ is the integer part of $\beta$.
\end{theorem*}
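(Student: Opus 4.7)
My plan is to reduce (\ref{bd__}) to a Carleman inequality for $-\Delta+\mu+V$ with polynomial weight $|x|^{-\tau}$, applied at a level $\tau$ comparable to $[\beta]+1$; the hypothesis that $u$ vanishes to order $\beta$ will enter only to justify the limiting procedure that legitimizes plugging $u$ into the Carleman estimate. Since $f\equiv 0$ on $B(0,1)$, one has $(-\Delta+\mu+V)u=0$ there. Fixing a cutoff $\chi\in C_c^\infty(B(0,1))$ with $\chi\equiv 1$ on $B(0,1/2)$,
\begin{equation*}
(-\Delta+\mu+V)(\chi u)=[-\Delta,\chi]u=:g
\end{equation*}
is supported in the annulus $\{1/2\le|x|\le 1\}$, where both the Carleman weight and the contribution of $V$ are benign.

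The heart of the argument is a Carleman inequality of the form
\begin{equation*}
\bigl\||x|^{-\tau-1}w\bigr\|_{L^p(\mathbb R^3)} \le C\,\bigl\||x|^{-\tau+1}(-\Delta+\mu+V)w\bigr\|_{L^q(\mathbb R^3)}
\end{equation*}
for $w\in C_c^\infty(\mathbb R^3\setminus\{0\})$, with $\tau$ kept away from the exceptional values coming from the spherical harmonic decomposition of $-\Delta$ in $d=3$. For $V\equiv 0$ this is a Hardy-weighted Carleman in the tradition of Jerison--Kenig. The potential $V$ is moved to the left-hand side by invoking its form-boundedness: after conjugating with the weight, the integral of $V$ against $(|x|^{-\tau-1}w)^{2}$ is dominated by a fraction of the gradient-Carleman term plus a lower-order piece, and smallness of the form-bound closes the absorption. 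The term $\mu w$ carries an extra factor $|x|^{2}\le 1$ on $B(0,1)$ and is absorbed analogously. I then apply the inequality to $w=\chi u\,\eta_\varepsilon$, where $\eta_\varepsilon$ vanishes on $B(0,\varepsilon)$ and equals $1$ off $B(0,2\varepsilon)$: the right-hand side decomposes into the bounded piece from $g$ and the shell commutator from $\nabla\eta_\varepsilon$, supported on $\{\varepsilon\le|x|\le 2\varepsilon\}$. Here the hypothesis of order-$\beta$ vanishing enters: since $\beta$ is ``substantial'', the weighted $L^p$ norms of $u$ and of its first-order weighted counterpart on $B(0,2\varepsilon)$ tend to $0$ with $\varepsilon$, killing the shell contribution. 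Fatou on the left then gives (\ref{bd__}).

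The main obstacle is the Carleman inequality itself when $V$ is only form-bounded: classical Carleman literature (Jerison--Kenig, Sawyer) requires $V$ in a scale-invariant Lebesgue class such as $L^{3/2}$, whereas form-bounded potentials strictly enlarge this class (they include $c/|x|^2$ with $c$ arbitrarily close to the Hardy constant). Consequently the absorption of $V$ is tight and depends decisively on the form-bound being small; a complementary subtlety is the choice of the exponents $p,q$ in the Carleman estimate, which must match the weighted Hardy gradient calculus on one hand and the form-inequality on the other. The exceptional levels $\tau$ associated with the angular spectrum on $S^2$ must be dodged, but they are discrete and leave room to hit $\tau$ near $[\beta]+1$.
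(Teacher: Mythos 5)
Your plan is the classical conjugated-Carleman scheme with power weights in the tradition of Jerison--Kenig and Sawyer, but it contains two genuine gaps that are precisely what the paper's argument is built to avoid. First, the absorption of $V$. Form-boundedness controls $|V|^{1/2}$ against the gradient in $L^2$, i.e.\ $\langle|V|\varphi,\varphi\rangle \le \delta\|\nabla\varphi\|_2^2 + c_\delta\|\varphi\|_2^2$. Your Carleman inequality $\||x|^{-\tau-1}w\|_{L^p} \le C\||x|^{-\tau+1}(-\Delta+\mu+V)w\|_{L^q}$ requires bounding $\||x|^{-\tau+1}Vw\|_{L^q}$ by a small multiple of $\||x|^{-\tau-1}w\|_{L^p}$; that is a smallness statement about the \emph{multiplication} operator $V\colon L^p_{\rm weighted}\to L^q_{\rm weighted}$, which is an $L^{d/2}$ (or weak $L^{d/2}$) hypothesis and is strictly stronger than form-boundedness. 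Form-boundedness gives $\||V|^{1/2}\phi\|_2$ control, not $\|V\phi\|_q$ control, and in the $L^p$--$L^q$ Carleman duality there is no gradient term on the left against which to trade. You acknowledge the tension (``the choice of the exponents $p,q$ \dots must match \dots the form-inequality'') but do not resolve it, and it is not a technicality: it is exactly why \cite{KSh} had to add the extra hypothesis $V\in L^{1+\varepsilon}_{\loc}$. The paper sidesteps the issue structurally. It never conjugates $-\Delta$ with a weight; instead it subtracts the $(N-1)$-degree Taylor polynomial at $0$ from the Riesz identity $u_j=(-\Delta)^{-1}(-\Delta u_j)$, which by Sawyer's estimate \eqref{e1} introduces the weight $\varphi_N=|x|^{-N}$ \emph{multiplicatively} on both sides of the integral operator, and then absorbs $V$ through the $L^p\to L^p$ bound $\|V_1^{1/p}(-\Delta)^{-1}|V|^{1/p'}\|_{p\to p}\le\kappa_p\nu$ of Proposition~\ref{prop_p}. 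That operator bound \emph{is} a genuine consequence of form-boundedness, and is the correct way to feed form-boundedness into a unique-continuation argument.

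Second, your shell commutator produces $2\nabla\eta_\varepsilon\cdot\nabla(\chi u)$, so the contribution to be killed involves $\nabla u$ on $\{\varepsilon\le|x|\le 2\varepsilon\}$. Your hypothesis is that $u$ vanishes to order $\beta$ in $L^p$; nothing controls the vanishing of $\nabla u$, and the weighted shell norm of $\nabla u$ need not go to zero. Deriving such a bound from elliptic regularity would reintroduce exactly the Sobolev-interpolation steps the paper is eliminating (and with them the unnatural $\varepsilon$-dependence inherited from \cite{KSh}). The paper's fix is to integrate the offending term by parts so that the derivative falls on the kernel rather than on $u$: the error becomes $\nabla(-\Delta)^{-1}E_j(u)$ with $E_j(u)=-2u\eta\nabla\xi_j$ depending only on $u$, and it is then estimated using the new truncated-kernel bound \eqref{e2} for $[\nabla(-\Delta)^{-1}]_N$, whose proof via the expansion of $\partial_z|1-z|^{-1}$ is one of the paper's contributions. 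Your proposal has no device of this kind, and without one the limit $\varepsilon\downarrow 0$ does not close.
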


For the detailed statement, see Theorem \ref{thm1} below. One easily obtains from \eqref{bd__} e.g.

\begin{corollary*}
	$
	\ord^p_{x=0}u \leq \log_{1/a}\frac{K}{a\|u\|_{L^p(B(0,a))}}$, $0<a<1$.
	\end{corollary*}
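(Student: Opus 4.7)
The plan is purely algebraic: extract the pointwise scaling information directly from the weighted $L^p$ estimate \eqref{bd__} and then invert logarithmically to bound $\beta$. Once the Theorem is in hand, the corollary is essentially a one-line computation and requires no further analytic input.

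First I would apply the Theorem to $\beta = \ord^p_{x=0}u$ (or to any admissible value and pass to the supremum at the end). Rewriting \eqref{bd__} in integral form gives
\[
\int_{B(0,1)} |x|^{-p([\beta]+1)}\,|u(x)|^p\, dx \leq K^p.
\]
On the smaller ball $B(0,a)$ with $0<a<1$, the weight admits the pointwise lower bound $|x|^{-([\beta]+1)} \geq a^{-([\beta]+1)}$, so restricting the integral to $B(0,a)$ and pulling the constant out of the integral yields
\[
\|u\|_{L^p(B(0,a))} \leq K\, a^{[\beta]+1}.
\]

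Next I would solve for $\beta$ by taking logarithms. Since $\log a < 0$, dividing by $\log a$ reverses the inequality, giving
\[
[\beta]+1 \leq \frac{\log\bigl(K/\|u\|_{L^p(B(0,a))}\bigr)}{\log(1/a)} = \log_{1/a}\frac{K}{\|u\|_{L^p(B(0,a))}}.
\]
Combining the trivial estimate $\beta \leq [\beta]+1$ with the identity $\log_{1/a}\bigl(K/\|u\|_{L^p(B(0,a))}\bigr) = \log_{1/a}\bigl(K/(a\|u\|_{L^p(B(0,a))})\bigr) - 1$ delivers the claimed inequality
\[
\ord^p_{x=0}u \leq \log_{1/a}\frac{K}{a\,\|u\|_{L^p(B(0,a))}}.
\]

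The only analytic input is the invocation of the Theorem; the rest is elementary manipulation. The harmless extra factor $a$ in the denominator on the right-hand side exists precisely to absorb, in a clean way, the rounding step $\beta\mapsto [\beta]+1$, and so there is no real obstacle to surmount here.
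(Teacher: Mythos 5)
Your algebraic core is the same as the paper's: restrict the weighted estimate \eqref{est_m} to $B(0,a)$, use $|x|^{-[\beta]-1}\geq a^{-[\beta]-1}$ there, and take $\log_{1/a}$ to get $[\beta]+1\leq\log_{1/a}\bigl(K/\|u\|_{L^p(B(0,a))}\bigr)$. The identity $\log_{1/a}\tfrac{K}{\|u\|}=\log_{1/a}\tfrac{K}{a\|u\|}-1$ is also correct. That part is fine.

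The gap is in the last step: you cannot ``apply the Theorem to $\beta=\ord^p_{x=0}u$.'' The Theorem demands $\beta\notin\mathbb Z$ and $p([\beta]+1-\beta)<1$, and $\ord^p_{x=0}u$ need satisfy neither; indeed the admissibility constraint forces $\beta$ to lie in a small left neighbourhood of an integer, and the supremum of admissible $\beta\leq\ord^p_{x=0}u$ can be strictly smaller than $\ord^p_{x=0}u$ (by up to nearly $1$). Your fallback ``pass to the supremum'' thus bounds $\sup\beta$, not $\ord^p_{x=0}u$. Moreover, the ``trivial estimate $\beta\leq[\beta]+1$'' that you invoke points in the wrong direction for bridging this gap: what is needed is an upper bound of $\ord^p_{x=0}u$ in terms of $\beta$, not an upper bound of $\beta$ in terms of $[\beta]+1$. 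The paper closes the gap by choosing an admissible $\beta$ with $\ord^p_{x=0}u\leq\beta+1$; then $\ord^p_{x=0}u\leq\beta+1\leq[\beta]+2\leq\log_{1/a}\tfrac{K}{\|u\|}+1=\log_{1/a}\tfrac{K}{a\|u\|}$. In other words, the extra $a$ in the denominator (equivalently the $+1$) is not there to absorb $\beta\mapsto[\beta]+1$ — it absorbs the slack $\ord^p_{x=0}u\leq\beta+1$ needed because $\beta$ cannot be taken equal to the order of vanishing. With that substitution your argument is correct.
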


	(See Remark 2 in Section \ref{discussion_sect}.)		So, if $\|u\|_{L^p(B(0,a))}$ is not too small, then the order of vanishing of $u$ cannot be too large. On the other hand, if the order of vanishing is large, then $\|u\|_{L^p(B(0,a))}$ must be small. Results of this type have appeared in the literature, see \cite{DZ, KT, MVs} and references therein, although there the authors treat considerably less singular potentials.

It should be noted that the effect of vanishing and ensuing regularity of weak solutions $u$ to $(-\Delta + V)u=0$, as well as of weak solutions to the corresponding parabolic equation, with supercritical positive potential $V(x)=c\frac{1}{|x|^{2+\gamma}}$, $\gamma >0$ (obviously, not form-bounded) was studied recently by Li-Zhang \cite{LZ}.

	\subsection{Comments}
	1. We prove estimate \eqref{bd__} using Carleman's method: one recognizes \eqref{bd__} as a finite unique continuation-type statement. In fact, the constant $K$ in \eqref{bd__} does not depend on $[\beta]$,
so if $u$ vanishes to infinite order at $x=0$  then taking $\beta \rightarrow \infty$ we obtain $u \equiv 0 $ on $B(0,1)$. Thus, we obtain as a by-product a strong unique continuation (SUC) result:
\begin{center}
``$u$ vanishes to infinite order at a point $\Rightarrow$ $u \equiv 0$ everywhere.''
\end{center}
The corresponding SUC result for solutions of the differential inequality $|\Delta u| \leq |Vu|$ with form-bounded $V$ was obtained \cite{KSh}. So, to prove the theorem, one is tempted to take the corresponding estimate of type \eqref{bd__} from \cite{KSh} and call it a day. Unfortunately, this leads to an unsatisfactory result. The reason is that  the proof of the SUC in \cite{KSh} requires an additional to the form-boundedness assumption $V \in L^{1+\varepsilon}_{\loc}$, $\varepsilon>0$, and the resulting upper bound on the order of vanishing from \cite{KSh} is unnatural: it tends to infinity as $\varepsilon \downarrow 0$ (note that one should be able to take $\varepsilon=0$ since for any $\varepsilon>0$ there are form-bounded $V \not \in L^{1+\varepsilon}_{\loc}$). The problem is not technical but ideological. Namely, the proof of SUC in \cite{KSh} uses in an essential manner interpolation Sobolev-type inequalities to control the error terms. As \cite{KSh} themselves demonstrate in their proof of the weak unique continuation for $|\Delta u| \leq |Vu|$ (``vanishing in an open set $\Rightarrow$ vanishing everywhere'') avoiding the use of Sobolev-type inequalities and resorting instead to the use of $L^2 \rightarrow L^2$ bounds allows to relax the assumptions on $V$ to mere form-boundedness. In the present paper we exclude the additional assumption $V \not \in L^{1+\varepsilon}_{\loc}$ (and the unnatural dependence of the upper bound on $\varepsilon>0$) by eliminating the use of Sobolev-type inequalities. In a sense, our proof of Theorem \ref{thm1} is closer to the idea of \cite{KSh} than their own proof of the SUC.  The new proof, however, required a rather substantial modification of the argument in \cite{KSh}. In particular, we now employ  estimate \eqref{e2} in addition to \eqref{e1} to control the ``error term'' of gradient type $\nabla(-\Delta)^{-1}E_j(u)$ (it is when estimating this term that we use the hypothesis on the order of vanishing of $u$).

\smallskip

2. The novelty in this paper concerns dimension $d=3$. In dimension $d \geq 4$, similarly to \cite{KSh}, we would have to impose a stronger requirement that $|V|^{(d-1)/2}$ is form-bounded with respect to $(-\Delta)^{(d-1)/2}$,
 so we discuss the corresponding result only briefly, see Remark \ref{rem_d} below.

\smallskip
	
	3.	Both the singular weight $|x|^{-[\beta]-1}$ in the Carleman method and the Lyapunov function $|x|^\beta$ in the desingularization method are born out of the Riesz potential $(-\Delta)^{-1}(x,y)$. 
Both approaches to estimating the order of vanishing have a common core that we touched in this work, but that still needs to be fully understood and exploited. 

\smallskip
	
4.~In \cite{KSh} the authors stay in $L^2$, while for the purpose of estimating the order of vanishing of the Green function it is more natural to work in $L^p$, cf.\,Remark \ref{rem_p} below. However, we keep intact the $L^2 \rightarrow L^2$ assumption	on $V$ (i.e.\,the form-boundedness), and thus its class of singularities. 

\smallskip

5. Our proof of \eqref{bd__} applies to $|\Delta u| \leq |Vu|$ and yields a SUC result  in dimension $d=3$  that contains, to the best of our knowledge, all the existing results on the SUC in the spaces
of solutions $u$ large enough to contain the eigenfunctions of $-\Delta + V$, including the classical results of
Sawyer \cite{S} ($V$ is in the Kato class), Jerison-Kenig \cite{JK} ($V$ in $L^{d/2}_{\loc}$) and Stein \cite{St} ($V$ in the weak-$L^{d/2}$ space). (Historically, the principal motivation behind the efforts to prove UC for $|\Delta u| \leq |Vu|$ with singular $V$ is the problem of absence of positive eigenvalues of $-\Delta + V$ in $\mathbb R^d$.) We state the corresponding result in Section \ref{discussion_sect}.

\smallskip

6.~When estimating the order of vanishing of the Green function, in principle we face a problem that is fundamentally simpler than the problem of proving unique continuation for eigenfunctions of the Schr\"{o}dinger operator $H$. Indeed, unlike the Green function, the eigenfunctions can be widely oscillating; the key feature of Carleman's method is that it allows to combat these oscillations via singular weights. 
On the other hand, when proving unique continuation, one is working with a function that is \textit{actually} identically zero, while the Green function is non-trivial and  can vanish to a finite order, cf.\,\eqref{H}.

See further discussion in Section \ref{discussion_sect}.

\subsection{About the proof}
Put $u=\eta_j u$, where $\eta_j$ is an appropriate cutoff function identically equal to $0$ in $B(0,j^{-1})$ ($j$ will be taken to $\infty$ to take into account that $u$ vanishes at $x=0$). We have
	\begin{equation}
	\label{id44}
		u_j=(-\Delta)^{-1}(-\Delta u_j)
	\end{equation}
which yields
	\begin{equation}
	\label{id45}
		u_j =(-\Delta)^{-1}\eta_j(-\Delta u) + \nabla(-\Delta)^{-1}E_j(u) + (-\Delta)^{-1}\tilde{E}_j(u)
	\end{equation}
	for appropriate ``error terms'' $E_j$, $\tilde{E}_j$ that depend on $u$ but not on its derivatives (this is important since we want to avoid applying Sobolev-type inequalities to control vanishing of $\nabla u$). Set $N:=[\beta]+1$.
Now we subtract the $(N-1)$-degree Taylor polynomial at $x=0$ from both sides of identity \eqref{id45}. Since $u_j$ is identically zero around the origin, this will not change the left-hand side of \eqref{id45}, but it will introduce 
a singular weight $\varphi_N(x):=|x|^{-N}$ into the right-hand side via estimate \cite{S}
$$\bigl|(-\Delta)^{-1}(x,y) - T^{N-1}_{x=0}\big((-\Delta)^{-1}(x,y)\big) \bigr| \leq 
C\frac{\varphi_N(y)}{\varphi_N(x)}(-\Delta)^{-1}(x,y),
$$
(and similarly for $\nabla_{x_i}(-\Delta)^{-1}(x,y)$, see \eqref{e2} below).
Thus one obtains from \eqref{id45}
\begin{equation}
\label{i7}
|u_j| \leq C \varphi_N^{-1} (-\Delta)^{-1} \varphi_N \eta_j|\Delta u| + \text{ error terms}.
\end{equation}
Multiplying \eqref{i7} by $\mathbf{1}_{B(0,1)} |V|^{\frac{1}{2}}\varphi_N$ (say, working in $L^2$), we arrive at
\begin{align*}
\|\mathbf{1}_{B(0,1)} |V|^{\frac{1}{2}}\varphi_N u_j\|_2 & \leq C\|\mathbf{1}_{B(0,1)} |V|^{\frac{1}{2}}(-\Delta)^{-1}\eta_j \varphi_N |\Delta u|\|_2 + \text{ error terms 2} \\
& (\text{$-\Delta u = -Vu+f-\mu u$ }) \\
& \leq C\|\mathbf{1}_{B(0,1)} |V|^{\frac{1}{2}}(-\Delta)^{-1}\varphi_N |V||u_j|\|_2 + \text{ error terms 3}
\end{align*}
hence
\begin{align}
 \|\mathbf{1}_{B(0,1)} |V|^{\frac{1}{2}}\varphi_N u_j\|_2 \leq C\|\mathbf{1}_{B(0,1)} |V|^{\frac{1}{2}}(-\Delta)^{-1}|V|^{\frac{1}{2}}\mathbf{1}_{B(0,1)}\|_{2 \rightarrow 2}\,& \|\mathbf{1}_{B(0,1)} |V|^{\frac{1}{2}}\varphi_N u_j\|_2 \label{i8} \\
& + \text{ error terms 4}. \notag
\end{align}
The local form-boundedness condition on $V$ is $\|\mathbf{1}_{B(0,1)} |V|^{1/2}(-\Delta)^{-1}|V|^{\frac{1}{2}}\mathbf{1}_{B(0,1)}\|_{2 \rightarrow 2} \leq \nu$, so \eqref{i8} yields
$$
\|\mathbf{1}_{B(0,1)} |V|^{\frac{1}{2}}\varphi_N u_j\|_2 \leq C\nu \|\mathbf{1}_{B(0,1)} |V|^{\frac{1}{2}}\varphi_N u_j\|_2 + \text{ error terms 5},
$$
i.e.
$$
(1-C\nu)\|\mathbf{1}_{B(0,1)} |V|^{\frac{1}{2}}\varphi_N u_j\|_2 \leq \text{ error terms 5}.
$$
Without loss of generality $|V| \geq 1$, so, provided that the form-bound $\nu$ is so small that $1-C\nu>0$, the last inequality gives 
\eqref{bd__} upon estimating properly the error terms.

To the best of our knowledge, the idea of subtracting Taylor polynomial from both sides of \eqref{id44} to prove unique continuation first appeared in Sawyer \cite{S}.

\subsection{Notations}
We denote the $L^p$-norm by $\|f\|_p$ and the norm of an operator $T:L^p\rightarrow L^q$ by $\|T\|_{p \rightarrow q}$. 
The open ball with centre $x_0\in\mathbb R^d$ and radius $r>0$ is denoted by $B(x_0,r)$, 
its complement by $B^c(x_0,r)$, and its closed counterpart by $\bar{B}(x_0,r)$. The indicator function of the ball is written $\mathbf{1}_{B(x_0,r)}$ and, in the particular case when $x_0=0$, we employ the shorthand $\mathbf{1}_{r}:=\mathbf{1}_{B(0,r)}$.
Put
$$
\langle f \rangle:=\int_{\mathbb R^d} f dx, \quad \langle f,g\rangle:=\langle f\bar{g}\rangle
$$

\bigskip

\section{Main result}

	\begin{definition}
	\label{def_V}
		A potential $V \in L^1_{\loc}$ is said to be form-bounded if there exists $\delta>0$ such that the following quadratic form inequality holds:
		\begin{equation*}
			\langle |V|\varphi,\varphi\rangle \leq \delta \langle \nabla \varphi,\nabla \varphi\rangle + c_\delta\langle \varphi,\varphi\rangle
		\end{equation*}
		for all $\varphi \in C_c^\infty$, for some constant $c_\delta \geq 0$ (written as $V \in \mathbf{F}_\delta$).

	\end{definition}

	The constant $\delta$ is called the form-bound of $V$. 
Equivalently, $V \in \mathbf{F}_\delta$  can be re-stated as $$
		\||V|^{\frac{1}{2}}(\lambda-\Delta)^{-\frac{1}{2}}\|_{2 \rightarrow 2} \leq \sqrt{\delta},$$
		where $\lambda=\frac{c_\delta}{\delta}$.
		
The assumption $V \in \mathbf{F}_\delta$ with $\delta<1$ ensures that the symmetric form $t[u,v]=\langle \nabla u,\nabla v\rangle + \langle Vu,v\rangle$, $D(t)=W^{1,2}$ is semi-bounded from below and closed, and hence determines a unique self-adjoint operator $H$,
$$t[u,v]=\langle Hu,v\rangle, \quad v \in D(t), \quad u \in D(H) \subset W^{1,2} \cap \{u \in L^2 \mid |V|^{\frac{1}{2}}u \in L^2\},$$ 
denoted by
$$H=-\Delta \dotplus V \quad (\text{the form-sum of $-\Delta$ and $V$}),$$
see e.g.\,\cite[Ch.VI]{Ka}. 
(When constructing a self-adjoint realization of $-\Delta + V$ in $L^2$ one should distinguish between the positive and the negative parts of $V$ or, moreover, take into account the cancellation phenomena \cite{MVe}. However, we are interested here in potentials whose positive part is larger than the negative part so that the Green function vanishes rather than blows up, so the fact that we impose a constraint on $|V|$, as is dictated by the method, does not appear to be too restrictive.)

\begin{subclasses}
The following are some sub-classes of $\mathbf{F}_\delta$ defined in elementary terms (listed in the increasing order): 

1) $L^\frac{d}{2}$ class (the inclusion follows easily from the Sobolev inequality);

2)  weak $L^{\frac{d}{2}}$ class (see \cite{KPS} for the proof of inclusion $\subset \mathbf{F}_\delta$), e.g.\,$V(x)=\delta\frac{(d-2)^2}{4}|x|^{-2} \in \mathbf{F}_\delta$ with $c_\delta=0$;

3) Campanato-Morrey class ($s>1$),
$$
\left\{V \in L_{\loc}^s: \biggl(\frac{1}{|Q|}\int_Q |V(x)|^s dx \biggr)^{\frac{1}{s}} \leq c_s l(Q)^{-2} \text{ for all cubes $Q$}\right\},
$$
$|Q|$ and $l(Q)$ are the volume and the side length of a cube $Q$, respectively;

4) Chang-Wilson-Wolff class ($s>1$),
$$
\left\{V \in L_{\loc}^s: \sup_Q \frac{1}{|Q|}\int_Q |V(x)|\, l(Q)^2 \varphi\big(|v(x)|\,l(Q)^2 \big) dx<\infty\right\},
$$
where 
$\varphi:[0,\infty[ \rightarrow [1,\infty[$ is an increasing function such that
$
\int_1^\infty \frac{dx}{x\varphi(x)}<\infty.
$
See \cite{CWW} for the proof of inclusion of this class into $\mathbf{F}_\delta$.

In 1) the form-bound $\delta$ can be chosen arbitrarily small, while in 2)-4) $\delta$ depends on the norm of $V$ in these classes.
\end{subclasses}

Throughout the paper:
\begin{equation}
\label{hyp}
\tag{$C$}
\begin{array}{c}
V \in \mathbf{F}_\delta \text{ with } \delta<1, \\[3mm]
\text{$u$ is the solution to $\mu u + Hu=f$ for  $f \in L^1 \cap L^\infty$, $f=0$ in $B(0,1)$, $\mu>\frac{c_\delta}{\delta}$}.
\end{array}
\end{equation}

In order to work in $L^p$	while keeping intact the form-boundedness (i.e.\,$L^2 \rightarrow L^2$) assumption	on $V$, we will need the following result of Beliy-Sem\"{e}nov \cite{BS}: the form-sum $H=-\Delta \dotplus V$ admits a realization in $L^p$, $p \in ]p_-,p_+[$, $p_{\pm}:=\frac{2}{1 \mp \sqrt{1-\delta}}$ as the generator $H_p$ of the $C_0$ semigroup 
	$$
	e^{-tH_p}:=\bigl[e^{-tH} \bigr]_{L^p \rightarrow L^p}^{\rm clos} \quad (\text{the closure of operator}).
	$$
The interval $]p_-,p_+[$ is sharp. In particular, $u=(\mu+H)^{-1}f$ is in general not in $L^\infty_{\loc}$, even if $f \in C_c^\infty$.

	\begin{definition}
		\label{def_o}

		A function $u \in L^p_{\loc}$ is said to vanish in $L^p$ at $x \in \mathbb R^d$  to order $\beta> 0$ if 
		\begin{equation}
			\label{v1}
			\lim_{r \downarrow 0}\frac{1}{r^s}\big\langle \mathbf{1}_{B(x,r)}|u|^p\big\rangle=0 \quad \text{for every $0 < s < d+p\beta$}.
		\end{equation}
	\end{definition}
	
			The supremum of such $\beta$, called the order of vanishing of $u$ in $L^p$ at $x$, will be denoted by $\ord^p_{x}u$. 				If $p=2,$ we write simply $\ord_x u$.

	For example, if $u(x)=|x|^\alpha$, $\alpha>0$, then $\ord^p_{x=0}|x|^\alpha=\alpha$.

\medskip

	\begin{theorem}
		\label{thm1}
		Let $d=3$.
Assume \eqref{hyp}. If, additionally,
				$$
		\|\mathbf{1}_{B(0,3)}|V|^\frac12(-\Delta)^{-\frac{1}{2}}\|_{2 \rightarrow 2} \leq \sqrt{\nu}
		$$		
		with a sufficiently small local form-bound $\nu$, then the following is true.
		
		If $u$ vanishes in $L^p$ at $x=0$ 
		for some $p \in [2,p_+[$ and its order of vanishing $\ord^p_{x=0}u$ is ``substantial'' in the sense that we can fix a positive $\beta \not\in \mathbb Z$, $\beta \leq \ord^p_{x=0}u$, with the property that $p([\beta]+1-\beta)<1,$
		then
		\begin{equation}
			\label{est_m}
			\tag{$\star$}
			\|\mathbf{1}_{B(0,1)} |x|^{-[\beta]-1}  u \|_p \leq K,
		\end{equation}
		where the constant $K=K(\|f\|_p,\|f\|_2,\nu)<\infty$ is independent of $\beta$.

	\end{theorem}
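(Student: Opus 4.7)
The plan is to carry out, rigorously and in $L^p$, the Sawyer/Carleman argument sketched in ``About the proof.'' The Carleman weight $\varphi_N(x)=|x|^{-N}$ with $N:=[\beta]+1$ is generated by subtracting the degree $N-1$ Taylor polynomial (in $x$) from the Newton-potential representation of $u$, and the leading right-hand side is absorbed using the smallness of the local form-bound $\nu$.

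I would fix a smooth cutoff $\eta_j$ with $\eta_j\equiv 0$ on $B(0,1/j)$ and $\eta_j\equiv 1$ outside $B(0,2/j)$, and set $u_j:=\eta_j u$. Using $\nabla\eta_j\cdot\nabla u=\nabla\cdot(u\nabla\eta_j)-u\Delta\eta_j$ to eliminate derivatives of $u$ gives
$$-\Delta u_j=\eta_j(-\Delta u)+u\,\Delta\eta_j-2\nabla\cdot(u\,\nabla\eta_j),$$
so applying $(-\Delta)^{-1}$ produces \eqref{id45} with $E_j(u)=-2u\nabla\eta_j$ and $\tilde E_j(u)=u\Delta\eta_j$, both supported in the annulus $A_j:=B(0,2/j)\setminus B(0,1/j)$. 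The absence of derivatives of $u$ on the right is the key feature that later allows me to avoid any Sobolev-type estimate and to keep only the $L^2\to L^2$ hypothesis on $V$. Next I subtract the Taylor polynomial $T^{N-1}_{x=0}$ from both sides of \eqref{id45}: since $u_j\equiv 0$ on $B(0,1/j)$, its Taylor polynomial at the origin vanishes, so the left-hand side is unchanged, while on the right the pointwise Taylor-remainder bound for the Newton kernel and its gradient analogue \eqref{e2} introduce the weight $\varphi_N$ and yield the pointwise estimate \eqref{i7},
$$|u_j(x)|\leq C\,\varphi_N(x)^{-1}(-\Delta)^{-1}\bigl[\varphi_N\eta_j|\Delta u|\bigr](x)+R_j(x),$$
where $R_j$ collects the Taylor-remainder contributions coming from $E_j$ and $\tilde E_j$.

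The absorption step then proceeds as in the sketch. Substituting $-\Delta u=f-Vu-\mu u$ and using $f\equiv 0$ on $B(0,1)$, multiplying by $\varphi_N\mathbf{1}_{B(0,1)}$ and taking the $L^p$-norm isolates the single dangerous term $\|\mathbf{1}_{B(0,1)}(-\Delta)^{-1}[\varphi_N|V|u_j]\|_p$. Factoring $|V|=|V|^{1/2}|V|^{1/2}$ and invoking the local form-bound in operator form converts it into $C\nu\,\|\mathbf{1}_{B(0,1)}\varphi_N u_j\|_p$, the passage from the $L^2\to L^2$ bound on $|V|^{1/2}(-\Delta)^{-1}|V|^{1/2}$ to its $L^p\to L^p$ counterpart being made possible by the Beliy-Sem\"enov $L^p$-realization of $H$ recalled just before the statement. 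Provided $\nu$ is small enough that $1-C\nu>0$, the leading term can be absorbed and I am left with
$$(1-C\nu)\,\|\mathbf{1}_{B(0,1)}\varphi_N u_j\|_p\leq \|\mathbf{1}_{B(0,1)}\varphi_N R_j\|_p+C'(\|f\|_p,\|f\|_2,\nu).$$

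The hard part, and the place where the ``substantial vanishing'' hypothesis actually does its work, will be the uniform-in-$j$ control of $R_j$, and in particular of the gradient contribution $\nabla(-\Delta)^{-1}E_j(u)$. After Taylor subtraction this term is dominated, via \eqref{e2}, by $\varphi_N^{-1}$ times a Riesz-type potential supported in $A_j$, and its $L^p$-norm after weighting by $\varphi_N$ reduces to integrals essentially of the form $\int_{A_j}\varphi_N^p|u|^p\,dx$. This is where Definition \ref{def_o} enters, and the technical hypothesis $p([\beta]+1-\beta)<1$ is precisely what makes the exponents favourable enough for these integrals to vanish as $j\to\infty$. Once $R_j\to 0$ in the relevant norm, letting $j\to\infty$ in the displayed inequality yields \eqref{est_m}, and since the Taylor-remainder bounds for the Newton kernel and for its first derivative depend only on the ambient dimension, no $N$-dependent constant enters and $K$ is independent of $\beta$.
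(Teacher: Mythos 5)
Your decomposition misses the outer cutoff, and this is a real gap. The paper works with $\eta_j=\xi_j\eta$, where $\xi_j$ is your inner annular cutoff but $\eta$ is an \emph{additional} cutoff equal to $1$ on $B(0,2)$ and $0$ outside $B(0,3)$. You omit $\eta$, so your $u_j=\eta_j u$ equals $u$ outside $B(0,2/j)$ and is \emph{not} compactly supported. Two things then break. First, the Newton-potential identity $u_j=(-\Delta)^{-1}(-\Delta u_j)$, which is the launching pad, is no longer automatic for a function that does not decay. Second, after substituting $-\Delta u=f-Vu-\mu u$, your ``single dangerous term'' $(-\Delta)^{-1}[\varphi_N\,\eta_j\,|V|\,u]$ integrates $|V||u|$ over essentially all of $\mathbb R^3$, whereas the only control you have on $V$ is the \emph{local} form-bound on $B(0,3)$; moreover, with your cutoff $\eta_j f$ is not zero outside $B(0,1)$, and $\eta_j\mu u$ spreads to infinity. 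The outer cutoff $\eta$ is what localizes $\supp\eta_j\subset B(0,3)$ and makes all these terms controllable (the paper then splits off a harmless $J^c$ supported in $B(0,3)\setminus B(0,1)$).

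A subtler point concerns the divergence trick $\nabla\eta_j\cdot\nabla u=\nabla\cdot(u\nabla\eta_j)-u\,\Delta\eta_j$. You apply it globally to eliminate $\nabla u$. But once $\eta$ is reinstated (as it must be), $\nabla\eta_j=\eta\nabla\xi_j+\xi_j\nabla\eta$ has a piece supported in the \emph{outer} annulus $B(0,3)\setminus B(0,2)$. If you run that outer piece through $\nabla(-\Delta)^{-1}$ and the gradient Taylor-remainder bound \eqref{e2}, you pay a factor $N$ that multiplies a term which does \emph{not} vanish as $j\to\infty$, and the resulting bound is no longer $\beta$-independent. The paper is careful to apply the divergence manoeuvre only to the $\xi_j$-piece (so $E_j=-2u\,\eta\,\nabla\xi_j$ is supported near the origin, where the vanishing hypothesis kills the $N$-factor), and to leave $-2\,\xi_j\,\nabla\eta\cdot\nabla u$ inside $\tilde E_j$, where it is harmlessly absorbed via $\|\mathbf 1_{B(0,3)}\nabla u\|_r$ with $r<2$ and $u\in W^{1,2}$. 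Your explanation of why $K$ is $\beta$-independent (``the Taylor-remainder bounds \dots depend only on the ambient dimension'') is therefore also misleading: \eqref{e2} \emph{does} carry a factor $N$; the point is that it only hits error terms which vanish in the limit $j\to\infty$. Finally, to invoke Proposition \ref{prop_p} you need to multiply by $\mathbf 1_{B(0,1)}V_1^{1/p}\varphi_N$ rather than $\mathbf 1_{B(0,1)}\varphi_N$ (the paper's $V_1=|V|+\mu\vee 1$), which is also how one obtains the stronger weighted bound in comment~4 after the theorem; the Beliy--Sem\"enov $L^p$-realization you cite is not the tool used here.
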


A few comments are in order:

1. If $V \in \mathbf{F}_\delta$, $\delta<1$, the resolvents $(\mu+H)^{-1}$, $\mu>\frac{c_\delta}{\delta}$ are integral operators. This result is due to Sem\"{e}nov \cite{Se} who proved it by verifying Bukhvalov's criterion (in the situation where the Dunford-Pettis Theorem is inapplicable, since $e^{-tH}$ is not $L^2 \rightarrow L^p$ bounded for $p=\infty$). 

2.	If $\ord^p_{x=0}u>n+\frac{1}{p}$ for an integer $n \geq 0$, then  we can fix $\beta>n+\frac{1}{p}$ arbitrarily close to $n+\frac{1}{p}$.

3.	If $u$ vanishes to infinite order, then $u=0$ in $B(0,1)$ by \eqref{est_m} (in this regard, see Section \ref{discussion_sect}). 

4. In fact, we prove a stronger result:
$$
			\|\mathbf{1}_{B(0,1)} (|V| + 1)^{\frac{1}{p}} |x|^{-[\beta]-1}  u \|_p \leq K.
			$$

	\begin{remark}[Regarding $d \geq 4$]
\label{rem_d}
In the case $d \geq 4$ we have to impose, as in \cite{KSh}, a more restrictive assumption on $V$ (that, nevertheless, includes e.g.\,$V \in L^{\frac{d}{2},\infty}$):
				\begin{equation}
				\label{req_d}
	V \in L^{\frac{d-1}{2}}(\bar{B}(0,3)) \quad \text{ and } \quad	\|\mathbf{1}_{B(0,3)}|V|^{\frac{d-1}{4}}(-\Delta)^{-\frac{d-1}{4}}\|_{2 \rightarrow 2} \leq \sqrt{\nu}.
		\end{equation}
The reason is that in dimensions $d \geq 4$ the key bounds \eqref{e1}, \eqref{e2} are valid for $(-\Delta)^{\frac{d-1}{2}}$ rather than $-\Delta$. The assumption \eqref{req_d} then allows to run appropriate interpolation arguments, see \cite{KSh}. On the one hand, due to this more restrictive condition \eqref{req_d} on $V$, the issue with the extra assumption $V \in L^{1+\varepsilon}_{\loc}$ discussed in the introduction is not present. On the other hand, by assuming \eqref{req_d} and following the proof of the SUC in \cite{KSh}, one still gets an upper bound on the order of vanishing that is rather unnatural in low dimensions but improves as $d \rightarrow \infty$:
\textit{If \eqref{hyp} holds with $\delta$ small enough so that $s:=2\frac{d(d-1)}{d^2-d-4}<p_+$ and $V$ satisfies \eqref{req_d}
		with $\nu$ sufficiently small, then the following is true.}
		\textit{If $u$ vanishes in $L^s$ at $x=0$ 
and its order of vanishing $\ord^s_{x=0}u$ is substantial in the sense that we can fix a positive $\beta \not\in \mathbb Z$, $\beta \leq \ord^s_{x=0}u$, with the property that
$$[\beta]-\beta+2+\bigl(\frac{d}{2}-\frac{1}{2}\bigr)\frac{d-3}{d-1} <\frac{d}{s},$$
		then
$
			\|\mathbf{1}_{B(0,1)} |x|^{-[\beta]-1}  u \|_2 \leq K,
$
		where $K=K(\|f\|_p,\|f\|_2,\nu)<\infty$ is independent of $\beta$ (see Remark \ref{proof_d} for the proof).} 
One can improve this bound by reworking the proof of the SUC in \cite{KSh} along the lines of the proof in the present paper, i.e.\,excluding any essential use of Sobolev-type inequalities. We will not do it here to 
keep the paper short, also because we do not have anything to add here in what concerns a more important problem: to weaken \eqref{req_d} to  $\|\mathbf{1}_{B(0,3)}|V|^{\frac{1}{2}}(-\Delta)^{-\frac{1}{2}}\|_{2 \rightarrow 2} \leq \sqrt{\nu}$.
\end{remark}

Let us comment on the existence of a lower bound on the order of vanishing. Consider potential
	\begin{align}
		\label{V_m}
		V(x) =\delta\frac{(d-2)^2}{4}|x|^{-2} + V_0(x) \text{ with } V_0 \in \mathbf{F}_{\delta_0}, 
	\end{align}
	with $\delta_0/\delta$ assumed to be sufficiently small.

	\begin{definition}
		\label{def_O}
		For $u \in L^p_{\loc}$,
		denote by $\Ord^{p}_{x}u$ the supremum of $\beta_1 > 0$ such that 
		\begin{equation*}
			\big\langle \mathbf{1}_{B(x,1)}|\cdot-x|^{-s_1}|u|^p\rangle <\infty \quad \text{for every $0 < s_1< d+p\beta_1$}.
		\end{equation*}

	\end{definition}
	
			Analogously, for every $\alpha>0$, $\Ord^p_{x=0}|x|^\alpha=\alpha$.
	
	\medskip
	
	If one is willing to replace $\ord^p_{x}u$ by $\Ord^p_{x}u$, then the problem of finding a lower bound on the order of vanishing of $u$ for potential $\eqref{V_m}$ becomes easy (see Appendix \ref{appA}):
	\begin{equation}
		\label{bd_0}
		\Ord_{x=0}^{\frac{2d}{d-2}}u \geq \frac{d-2}{2}(\sqrt{1+\delta-\delta_0}-1).
	\end{equation}
	
\begin{remark}	
\label{rem_p}
If $\delta_0=0$, then we get the same lower bound on $\Ord_{x=0}^{\frac{2d}{d-2}}u$ (in fact, the equality) as we would get from \eqref{green}. In this regard, we note that $\Ord_{x=0}^{p}u$ with $p=2$ gives a suboptimal result.
\end{remark}

	On the other hand, it is trivial to see that
	\begin{equation}
		\label{bd_}
		\Ord^p_{x}u \leq \ord^p_{x}u.
	\end{equation}
The question arises: are $\ord_x^p u$ and $\Ord_x^p u$ comparable on solutions to $\mu u + Hu=f$?
Combined with \eqref{bd_}, the estimate \eqref{est_m} of Theorem \ref{thm1} yields in dimension $d=3$: for a given $\beta \not\in \mathbb Z$	
	$$\beta \leq \ord^p_{x=0}u,\;\;p([\beta]+1-\beta)<1 \quad \Rightarrow \quad
	[\beta]+1-\frac{3}{p} \leq \Ord_{x=0}^pu.
	$$

	\medskip

	\bigskip
	
	\section{Key estimates}

	We begin with few general definitions and results, valid in all dimensions $d \geq 3$. 
	Set $$(-\Delta)^{-\frac{\alpha}{2}}(x,y)=c_{d,\alpha}|x-y|^{-d+\alpha} \quad (0<\alpha <d), \quad x,y \in \mathbb R^d, x \neq y,
	$$
	$$
	\text{where}\quad c_{d,\alpha}:=\Gamma\big(\frac{d-\alpha}{2}\big)\big(\pi^{\frac{d}{2}}2^\alpha\Gamma\big(\frac{d}{2}\big) \big)^{-1},$$
	so $(-\Delta)^{-\frac{\alpha}{2}}f(x)=\big\langle (-\Delta)^{-\frac{\alpha}{2}}(x,\cdot)f(\cdot)\big\rangle$ for $f \in C_c$. 
	
	For $N \geq 1$, we define the ``truncated Riesz potential''
	$$
	[(-\Delta)^{-\frac{\alpha}{2}}]_N(x,y):=(-\Delta)^{-\frac{\alpha}{2}}(x,y) - T^{N-1}_{x}\bigl((-\Delta)^{-\frac{\alpha}{2}}(x,y)\bigr),
	$$
	where $T^{N-1}_{x}$ stands for the $(N-1)$-degree Taylor polynomial in the variable $x$ at $x=0$.

	Define in an analogous way $[\nabla_{x_i} (-\Delta)^{-\frac{d-1}{2}}]_N(x,y)$, $1 \leq i \leq d$.
	
	We put
	$$
	[(-\Delta)^{-1}]_Nf(x):=\big\langle  [(-\Delta)^{-1}]_N(x,\cdot)f(\cdot)\big\rangle, \quad f \in C_c,
	$$
	and define similarly operator $[\nabla_{x_i} (-\Delta)^{-\frac{d-1}{2}}]_N$.

	\medskip
	
	1. The following two estimates will play a crucial role. Define the singular weight
	$$\varphi_t(x):=|x|^{-t}, \quad t>0.$$

	\begin{proposition} 
		\label{sawyer}
		There exist constants $C_1=C_1(d)$ and $C_2=C_2(d)$ such that, for every $N \geq 1$, 
		\begin{equation}
			\label{e1}
			\tag{{\rm $S_1$}}
			\big|[(-\Delta)^{-\frac{d-1}{2}}]_N(x,y)\big|\leq C_1 \frac{\varphi_{N}(y)}{\varphi_{N}(x)}(-\Delta)^{-\frac{d-1}{2}}(x,y),
		\end{equation}
		\begin{equation}
			\label{e2}
			\tag{{\rm $S_2$}}
			\big | [\nabla_{x_i} (-\Delta)^{-\frac{d-1}{2}}]_N(x,y) \big| \leq C_2 N  \frac{\varphi_{N}(y)}{\varphi_{N}(x)}(-\Delta)^{-\frac{d-2}{2}}(x,y)
		\end{equation}
		for each $1 \leq i \leq d$, for all $x,y \in \mathbb R^d$, $x \neq y$, $y \neq 0$.
	\end{proposition}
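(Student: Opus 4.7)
For fixed $y\ne 0$, I would consider the one-parameter family
$h_y(s) := (-\Delta)^{-(d-1)/2}(sx,y) = c_{d,d-1}|sx-y|^{-1}$ as a function of the complex variable $s$. A direct computation factors
$|sx-y|^{2} = |x|^{2}(s-\zeta)(s-\bar\zeta)$,
where $\zeta,\bar\zeta\in\mathbb C$ are complex conjugate with $|\zeta|=|y|/|x|$ and, for real $s$, $|s-\zeta|=|sx-y|/|x|$. Thus $h_y$ extends to a holomorphic function on the disk $\{|s|<|y|/|x|\}$ with two conjugate branch-point singularities on the boundary circle. The plan is to combine this analytic picture with Taylor's integral remainder
$$
\bigl[(-\Delta)^{-(d-1)/2}\bigr]_N(x,y)\;=\;\int_0^1 \frac{(1-s)^{N-1}}{(N-1)!}\,h_y^{(N)}(s)\,ds,
$$
and its counterpart for $\partial_{x_i}h_y$, in the spirit of Sawyer \cite{S}.

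For \eqref{e1}, split into two cases according to whether $|x|\le|y|/2$ or $|x|>|y|/2$. In the first case, the distance from $[0,1]$ to $\{\zeta,\bar\zeta\}$ is comfortable, and a Cauchy estimate on a circle inside the disk of analyticity yields a bound on $|h_y^{(N)}(s)|$ of the form $N!\,|x|^N/|sx-y|^{N+1}$ (up to dimensional constants). Inserted into the integral remainder, a beta-type change of variable evaluates $\int_0^1(1-s)^{N-1}/|sx-y|^{N+1}ds$ and produces the target bound $(|x|/|y|)^N|x-y|^{-1}$. In the second case, where the disk of analyticity shrinks toward $s=1$ and Cauchy alone degenerates, I would estimate $G(x)$ and the Taylor polynomial $T^{N-1}_0 G(x)$ separately: $|G(x)|\lesssim(|x|/|y|)^N|x-y|^{-1}$ is automatic since $|x|/|y|\gtrsim 1$, while $|T^{N-1}_0 G(x)|$ is controlled via the explicit multipole expansion
$|x-y|^{-1}=|y|^{-1}\sum_{k\ge 0}C_k^{(d-2)/2}(\hat x\cdot\hat y)(|x|/|y|)^k$,
whose coefficients are uniformly bounded in the degree $k$ (for $d=3$ this is just $|P_k|\le 1$), so that the finite near-geometric sum is majorised by the same expression.

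For \eqref{e2} the identical argument applies to $\tilde h_y(s):=\partial_{x_i}(-\Delta)^{-(d-1)/2}(sx,y)$, which is meromorphic in $s$ with the same conjugate singularities but of one order higher. Cauchy's estimate now delivers $|\tilde h_y^{(N)}(s)|\lesssim(N+1)!\,|x|^N/|sx-y|^{N+2}$; integrating against $(1-s)^{N-1}/(N-1)!$ produces one extra factor of $N$ and upgrades the kernel power from $|x-y|^{-1}$ to $|x-y|^{-2}$, which matches $(-\Delta)^{-(d-2)/2}(x,y)$ and the prefactor $C_2 N$ in the statement.

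The main technical obstacle, as in Sawyer's original treatment, is the regime $|x|\sim|y|$: the singularities $\zeta,\bar\zeta$ pinch the unit circle in the $s$-plane, and a naive term-by-term majorization of the multipole series loses cancellation among terms and produces a spurious logarithmic or $(|y|-|x|)^{-1}$ factor. The way around this is to keep the tail of the series in its Laplace integral representation and use the elementary geometric inequality $|x|\sin\angle(x,y)\le|x-y|$ to express what remains as an elliptic-type integral that is majorised by an absolute (dimensional) constant times $(|x|/|y|)^N|x-y|^{-1}$, with the $N$-independence of $C_1$ tracing back precisely to the universality of the Gegenbauer coefficient bound.
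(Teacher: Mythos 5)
Your plan reproves both $S_1$ and $S_2$ from scratch via the one-parameter reduction $h_y(s):=(-\Delta)^{-(d-1)/2}(sx,y)$ together with the integral Taylor remainder, whereas the paper takes $S_1$ as a black box from Sawyer \cite{S} and only proves $S_2$. After the same dilation/rotation reduction to the complex plane, the paper writes $2\partial_z|1-z|^{-1}=(1-z)^{-1}\cdot|1-z|^{-1}$, expands the first factor as a geometric series, distributes the degree-$\ge N$ truncation of the product into $N+1$ tail sums, and bounds each tail by $C|z|^K|1-z|^{-1}$ using $S_1$ itself; the $N$ sums account directly for the factor $N$ in $S_2$. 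This is both shorter and avoids the technical regime you correctly identify as dangerous.

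The concrete gaps in your route are the following. First, the intermediate inequality you would deduce from the derivative bound $|h_y^{(N)}(s)|\le N!\,|x|^N/|sx-y|^{N+1}$ plus the radial lower bound $|sx-y|\ge |y|-s|x|$ is, after the exact beta evaluation $N\int_0^1(1-s)^{N-1}(1-sr)^{-N-1}ds=\frac{1}{1-r}$ (with $r=|x|/|y|$), precisely
$$
\bigl|[(-\Delta)^{-1}]_N(x,y)\bigr|\le \frac{(|x|/|y|)^N}{|y|-|x|},
$$
which is \emph{weaker} than the target $\frac{(|x|/|y|)^N}{|x-y|}$ by a factor $\frac{|x-y|}{|y|-|x|}$; this factor is unbounded when $|x|\to|y|$ at a fixed large angle (e.g.\ $x\perp y$). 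You do name this obstacle, but the invoked geometric inequality $|x|\sin\angle(x,y)\le|x-y|$ is not turned into an argument — that is exactly where the work lies, and it is not clear the one-parameter $s$-reduction sees this transverse geometry at all, since $|sx-y|\ge|y|-s|x|$ is all that the radial picture provides. Second, the case split at $|x|=|y|/2$ cannot give an $N$-independent constant: for $|y|/2<|x|<|y|$ the assertion ``$|G(x)|\lesssim(|x|/|y|)^N|x-y|^{-1}$ is automatic'' fails, since $(|x|/|y|)^N$ can be as small as $2^{-N}$; the split must be at $|x|=|y|$, and then the entire regime $|x|<|y|$ is subject to the obstruction above. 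Third, a literal Cauchy estimate on a circle of radius $\theta\,|sx-y|/|x|$, $0<\theta<1$, yields $|h_y^{(N)}(s)|\le \frac{N!}{\theta^N(1-\theta)}\,|x|^N/|sx-y|^{N+1}$, i.e.\ a spurious exponential factor; the sharp constant $N!$ in $d=3$ comes from the factorization $h_y(s)=|x|^{-1}((s-\zeta)(s-\bar\zeta))^{-1/2}$ and the Vandermonde/Gegenbauer identity $\sum_k\binom{2k}{k}\binom{2(N-k)}{N-k}=4^N$, not from Cauchy. So the core of $S_1$ is not actually proved by the proposal. Since $S_1$ is precisely what the paper leans on to get $S_2$ cheaply, I would recommend following the paper's route: cite $S_1$ from \cite{S} and derive $S_2$ by the geometric-series bookkeeping above.
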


	
	The first estimate \eqref{e1} is proved in Sawyer \cite{S}. The proof of the second estimate \eqref{e2} is obtained via a modification of the proof of \eqref{e1} in \cite{S}:
	
	\begin{proof}[Proof of \eqref{e2}]
		We will consider the case $|x| < |y|$. The  case $|x| \geq |y|$ is dealt with in the same way as in \cite{S}.
		
		Following \cite{S}, we use dilation and rotation to reduce our task to the proof of the following two estimates in the complex plane: for all $|z|<1$
		\begin{equation}
			\label{z2}
			\big | \partial_z |1-z|^{-1}-T^N_{z,\bar{z}}(\partial_z |1-z|^{-1})\big| \leq C N |z|^N|1-z|^{-2},
		\end{equation}
		\begin{equation}
			\label{z3}
			\big | \partial_{\bar{z}} |1-z|^{-1}-T^N_{z,\bar{z}}(\partial_{\bar{z}} |1-z|^{-1})\big| \leq C N |z|^N|1-z|^{-2}.
		\end{equation}

		Let us prove e.g.\,\eqref{z2}. Representing $|1-z|^{-1}=(1-z)^{-\frac{1}{2}}(1-\bar{z})^{-\frac{1}{2}}$, we have
		\begin{equation*}
			2\partial_z |1-z|^{-1}=(1-z)^{-1}(1-z)^{-\frac{1}{2}}(1-\bar{z})^{-\frac{1}{2}},
		\end{equation*}
		so we can expand
		$$
		2\partial_z |1-z|^{-1}=(1+z+z^2+\dots)\sum_{n+m \geq 0}^\infty c_{n,m}z^n\bar{z}^m.
		$$
		We obtain $ \partial_z |1-z|^{-1}-T^N_{z,\bar{z}}(\partial_z |1-z|^{-1})$ from the previous expansion by excluding the terms of order $ \leq N-1$:
		\begin{align*}
			&	2\partial_z |1-z|^{-1}  -T^N_{z,\bar{z}}(2\partial_z |1-z|^{-1}) \\
			=& \sum_{n+m \geq N} c_{n,m}z^n\bar{z}^m + z \!\sum_{n+m \geq N-1} c_{n,m}z^n\bar{z}^m  + \dots + z^{N-1} \!\sum_{n+m \geq 1} c_{n,m}z^n\bar{z}^m \\
			& + (z^N+z^{N+1}+\dots) \sum_{n+m \geq 0} c_{n,m}z^n\bar{z}^m.
		\end{align*}
		By \eqref{e1}, $|\sum_{n+m \geq K} c_{n,m}z^n\bar{z}^m| \leq C|z|^K |1-z|^{-1}.$ Therefore,
		\begin{align*}
			& \bigl|2\partial_z |1-z|^{-1}-T^N_{z,\bar{z}}(2\partial_z |1-z|^{-1}) \bigr| \\
			& \leq C|z|^N |1-z|^{-1} + C |z| |z|^{N-1} |1-z|^{-1} + \dots + C |z|^{N-1} |z| |1-z|^{-1} + |z|^N \frac{1}{|1-z|} |1-z|^{-1},
		\end{align*}
		which yields (upon redefining $C$)
		$$
		\bigl|\partial_z |1-z|^{-1}-T^N_{z,\bar{z}}(\partial_z |1-z|^{-1}) \bigr| \leq C N |z|^N|1-z|^{-2},
		$$
		as needed.		
	\end{proof}

	2. The next proposition is a special case of a result in \cite{BS} (see also \cite{LS}). 
	
	\begin{proposition}
		\label{prop_p}
		Assume that $0 \leq V \in L^1_{\loc}$ and $$\|V^{\frac{1}{2}}(-\Delta)^{-\frac{1}{2}}\|_{2 \rightarrow 2} \leq \sqrt{\nu}$$ 
		for some $\nu>0$. Then, for every $p \in ]1,\infty[$,
		$$\|V^{\frac{1}{p}}(-\Delta)^{-1}V^{\frac{1}{p'}}\|_{p \rightarrow p} \leq \kappa_p\nu, \quad \kappa_p=\frac{pp'}{4},
		$$
		where $p'=\frac{p}{p-1}$.
	\end{proposition}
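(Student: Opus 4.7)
The plan is to deduce the $L^p$ bound from the equivalent form inequality
\[
\langle V\varphi,\varphi\rangle\le\nu\,\|\nabla\varphi\|_2^2,\qquad \varphi\in C_c^\infty,
\]
by testing it against a suitable power of $u:=(-\Delta)^{-1}V^{1/p'}f$. Since the Riesz kernel $(-\Delta)^{-1}(x,y)=c_d|x-y|^{-d+2}$ is positive, it suffices to treat $f\ge 0$, so that $g:=V^{1/p'}f\ge 0$ and $u\ge 0$, and the desired inequality becomes $\|V^{1/p}u\|_p\le\kappa_p\nu\|f\|_p$.

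The computation will run in three steps. First, applying the form inequality with test function $\varphi=u^{p/2}$ gives
\[
\int V u^p\,dx \;\le\; \nu\,\|\nabla u^{p/2}\|_2^2 \;=\; \nu\,\frac{p^2}{4}\int u^{p-2}|\nabla u|^2\,dx.
\]
Second, integrating by parts against the identity $\nabla\!\cdot(u^{p-1}\nabla u)=(p-1)u^{p-2}|\nabla u|^2+u^{p-1}\Delta u$ and using $-\Delta u=g$ yields
\[
(p-1)\int u^{p-2}|\nabla u|^2\,dx=\int g\,u^{p-1}\,dx=\int V^{1/p'}f\cdot u^{p-1}\,dx.
\]
Third, Hölder's inequality with exponents $p,p'$, together with the identity $(p-1)p'=p$ (which makes $(V^{1/p'}u^{p-1})^{p'}=Vu^p$), gives $\int V^{1/p'}f\,u^{p-1}\,dx\le\|f\|_p\bigl(\int Vu^p\,dx\bigr)^{1/p'}$. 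Combining the three steps and dividing by the common factor $\bigl(\int Vu^p\,dx\bigr)^{1/p'}$ produces $\|V^{1/p}u\|_p\le\nu\,\tfrac{p^2}{4(p-1)}\|f\|_p$, and since $p^2/(p-1)=pp'$ the constant is exactly $\kappa_p=pp'/4$, as required.

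The main obstacle is justifying the choice $\varphi=u^{p/2}$ in the form inequality (which the hypothesis provides only for $\varphi\in C_c^\infty$) together with the integration by parts. I would begin with $f\in C_c^\infty\cap\{f\ge 0\}$, so that $g\in C_c^\infty$ and $u$ is smooth with $u(x)=O(|x|^{-d+2})$ at infinity; then approximate $u^{p/2}$ by $\psi_R\,(u\wedge M)^{p/2}$ with a smooth cut-off $\psi_R$, extend the form inequality to this $W^{1,2}$ test function by density, and pass to the limits $M,R\to\infty$ by monotone convergence, using the a priori finiteness of the resulting right-hand side to close the estimate. A standard density argument in $L^p$ then extends the bound to arbitrary $f\in L^p$. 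The three-line computation above is otherwise the whole content of the proof.
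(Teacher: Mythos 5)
Your computation is correct and matches the paper's argument essentially step for step: the paper also tests against $u^{p/2}$, uses $\langle -\Delta u, u^{p-1}\rangle = (p-1)\int u^{p-2}|\nabla u|^2$, and closes with the same H\"{o}lder step; the only difference is that where you derive $\kappa_p^{-1}\|\nabla u^{p/2}\|_2^2 \le \langle -\Delta u, u^{p-1}\rangle$ by hand, the paper cites it as \cite[Theorem 2.1]{LS} (an abstract inequality for Markov generators), which is a matter of presentation rather than substance. One small technical slip in your approximation scheme: you assert that $f\in C_c^\infty$, $f\ge 0$ makes $g=V^{1/p'}f\in C_c^\infty$, but since $V$ is only in $L^1_{\loc}$ this need not hold; the paper sidesteps this by first reducing to bounded, compactly supported $V$ (and then invoking Fatou at the end), and you should insert the same preliminary truncation of $V$ before the rest of your regularization argument.
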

	
	For the reader's convenience, we include the proof.
	
	\begin{proof}[Proof of Proposition \ref{prop_p}]
		It suffices to carry out the proof for a bounded $V$ having compact support, and then use  Fatou's Lemma.
		Define $A = -\Delta$, $D(A)= W^{2,2}$ and $A_p=-\Delta$, $D(A_p)= W^{2,p}$.
		Since $A$ is a Markov generator, $e^{-tA} \upharpoonright L^2 \cap L^p=e^{-tA_p} \upharpoonright L^2 \cap L^p$, we have by \cite[Theorem 2.1]{LS}
		$$
		0 \leq u \in D(A_p) \Rightarrow v := u^\frac{p}{2} \in D(A) \text{ and } \kappa_p^{-1} \|A^\frac{1}{2} v \|_2^2 \leq \langle A_p u, u^{p-1} \rangle.
		$$
		Now, let $u$ be the solution of $A_p u = V^{\frac{1}{p'}}|f|$, $f \in L^1 \cap L^\infty.$ Then, by our assumption on $V$,
		\[
		(\kappa_p \nu)^{-1} \| V^\frac{1}{2} v \|_2^2 \leq \langle A_p u, u^{p-1} \rangle,
		\]
		i.e.\,$
		(\kappa_p \nu)^{-1} \| V^\frac{1}{p} u\|_p^p \leq \langle A_p u, u^{p-1} \rangle$.
		Hence $\| V^\frac{1}{p} u\|_p^p \leq \kappa_p\nu\|f\|_p \|V^{\frac{1}{p'}}u^{p-1}\|_{p'} = \kappa_p\nu\|f\|_p \|V^{\frac{1}{p}}u\|_p^{p-1}$,
		and the result follows. 
	\end{proof}

	\bigskip
	
	\section{Proof of Theorem \ref{thm1}}

	Throughout this section, $d=3$. We will keep writing $d$ rather than $3$  to make the argument easier to follow. We introduce some notations first.
	
	\smallskip
	
	1.~We fix smooth cutoff functions $0 \leq \eta  \leq 1$ and $0\leq\xi_j\leq 1$ ($j \geq 1$) such that
	$$\eta =1 \text{ in } B(0,2), \quad \eta = 0 \text{ in } B^c(0,3), \quad |\nabla \eta| \leq c, \quad |\Delta \eta| \leq c^2,$$
	$$\xi_j= 1 \text{ in } B^c(0,2j^{-1}), \quad \xi_j= 0  \text{ in } B(0,j^{-1}),  \quad |\nabla\xi_j(x)|\leq cj, \quad |\Delta\xi_j(x)|\leq c^2 j^2$$
	for a generic constant $c$. 
	Set $$\eta_j:=\xi_j \eta, \quad j \geq 2.$$

	\medskip
	
	2.~We have $\mu u  + Hu=f$. 
	By a standard result, $D(H)$ is contained in the domain of the maximal operator $D(H_{\max})=\{v \in L^2 \mid Vv \in L^1_{\loc}, -\Delta v + Vv \in L^2\}$, so 
	\begin{equation}
		\label{delta}
		-\Delta u \in L^1_{\loc} \quad \text{ and } -\Delta u = -Vu- \mu u + f \text{ a.e.}
	\end{equation}
	
	\medskip
	
	3.~We start the proof of \eqref{est_m}. We have
	\begin{equation*}
		u_j=(-\Delta)^{-1}(-\Delta u_j),
	\end{equation*}
	where $u_j:=u \eta_j$. We evaluate
	\begin{equation*}
		-\Delta u_j =\eta_j(- \Delta u) + \nabla  E_j(u) + \tilde{E}_j(u),
	\end{equation*}
	where 
	$$
	E_j(u):=-2 u\eta \nabla \xi_j,
	$$
	$$
	\tilde{E}_j(u):=- u\xi_j\Delta \eta -2\nabla u \cdot (\nabla\eta)\xi_j + u\eta\Delta \xi_j.
	$$
	are the ``error terms''.
	The first error term will be disposed of using \eqref{e2}, while the easier second term will be dealt with using \eqref{e1}.
	
	Thus, 
	\begin{equation}
		\label{u2}
		u_j =(-\Delta)^{-1}\eta_j(-\Delta u) + \nabla(-\Delta)^{-1}E_j(u) + (-\Delta)^{-1}\tilde{E}_j(u).
	\end{equation}
	Now, we apply in both sides of \eqref{u2} $C^\infty$ mollifiers. Let $N:=[\beta]+1$, where $[\beta]$ is the largest integer at most $\beta$. We subtract from both sides the Taylor polynomial of order $N-1$ at $x=0$, and then pass to the limit. At this point we use the fact that $u_j$ is identically zero around the origin, arriving at
	\begin{align*}
		u_j & =\big[(-\Delta)^{-1}\big]_N \eta_j(-\Delta u) + \big[\nabla(-\Delta)^{-1}\big]_N E_j(u) + \big[(-\Delta)^{-1}\big]_N \tilde{E}_j(u).
	\end{align*}
	Put $V_1:=|V|+\mu \vee 1$. Multiplying the last equality by $\mathbf{1}_{B(0,1)} V_1^{\frac{1}{p}}\varphi_N$, we obtain
	\begin{align*}
		\mathbf{1}_{B(0,1)} V_1^{\frac{1}{p}}\varphi_N \eta_j |u|  \leq \; & \mathbf{1}_{B(0,1)} V_1^{\frac{1}{p}}\varphi_N \big|\big[(-\Delta)^{-1}\big]_N \mathbf{1}_{B(0,1)} \eta_j(-\Delta u)\big| \\
		+ & \mathbf{1}_{B(0,1)} V_1^{\frac{1}{p}}\varphi_N \big|\big[(-\Delta)^{-1}\big]_N \mathbf{1}_{B(0,1)}^c \eta_j(-\Delta u)\big|\\
		+& \mathbf{1}_{B(0,1)} V_1^{\frac{1}{p}}\varphi_N \big|\big[\nabla(-\Delta)^{-1}\big]_N E_j(u)\big| \\
		+ & \mathbf{1}_{B(0,1)} V_1^{\frac{1}{p}}\varphi_N \big|\big[(-\Delta)^{-1}\big]_N \tilde{E}_j(u)\big|
	\end{align*}
	or
	$$
	I \leq J+J^c+J_E+J_{\tilde{E}}.
	$$
	Note that $V_1^{\frac{1}{p}}u \in L^p$, and so $I \in L^p$, see \cite[Theorem 6.3]{LS} (the proof essentially consists of applying Proposition \ref{prop_p} to the Neumann series for $u$). In fact, 
	\begin{equation}
	\label{Vp}
	\|V_1^{\frac{1}{p}}u\|_p \leq C\|f\|_p
	\end{equation}
	(we will need this later).
	
	By \eqref{e1},
	\begin{align*}
		\|J\|_{p} & \leq C_1 \big\|\mathbf{1}_{B(0,1)} V_1^{\frac{1}{p}} (-\Delta)^{-1} |V|^{\frac{1}{p'}}\mathbf{1}_{B(0,1)} \big\|_{p \rightarrow p} \|\mathbf{1}_{B(0,1)}\varphi_N \eta_j |V|^{-\frac{1}{p'}}\Delta u\|_p \\
		& (\text{we apply Proposition \ref{prop_p}}\\
		& \text{and  use ``\eqref{delta} $\Rightarrow$ $\eta_j |\Delta u| \leq \eta_j V_1|u|$ because $\eta_j f=0$''}) \\
		& \leq C_1 \kappa_p\nu \|\mathbf{1}_{B(0,1)}\varphi_N \eta_j |V|^{\frac{1}{p}}u\|_p,
	\end{align*}
	hence $\|J\|_p \leq C_1 \kappa_p \nu \|I\|_p$.
	
	Thus, we arrive at
	\begin{equation}
		\label{main_ineq}
		(1-C_1\kappa_p\nu)\|I\|_p \leq \|J^c\|_p + \|J_E\|_p+\|J_{\tilde{E}}\|_p.
	\end{equation}
	From now on, we assume that $\nu$ is sufficiently small so that $C_1\kappa_p\nu<1$. The inequality \eqref{main_ineq} will yield the estimate \eqref{est_m} of Theorem \ref{thm1} upon taking $j \rightarrow \infty$, once we estimate the remaining terms $\|J^c\|_p$,  $\|J_E\|_p$, $\|J_{\tilde{E}}\|_p$: 
	
	1) By \eqref{e1},
	\begin{align*}
		\|J^c\|_p & \leq C_1 \big\|\mathbf{1}_{B(0,1)} V_1^{\frac{1}{p}} (-\Delta)^{-1} V_1^{\frac{1}{p'}}\mathbf{1}_{{B(0,3)}-{B(0,1)}} \big\|_{p \rightarrow p}\|\mathbf{1}_{{B(0,3)}-{B(0,1)}}\varphi_N V_1^{\frac{1}{p}}u\|_p \\
		& (\text{we have used $\supp \eta_j \subset B(0,3)$}) \\
		& \leq K_1 \|\mathbf{1}_{B(0,3)} V_1^{\frac{1}{p}}u\|_p.
	\end{align*}
	
	2) $\|J_E\|_p$ vanishes as $j \rightarrow \infty$. (This is the first place where we take into account that $u$ vanishes at least to order $\beta$ at $x=0$.) Namely, by \eqref{e2},
	\begin{align*}
		\|J_E\|_p & \leq C_2N\|\mathbf{1}_{B(0,1)} V_1^{\frac{1}{p}}(-\Delta)^{-\frac{1}{2}} \varphi_N|u||\nabla \xi_j|\|_{p} \\
		& (\text{we estimate $|\nabla \psi| \leq jc\mathbf{1}_{2j^{-1}}$ and $\varphi_N \mathbf{1}_{2j^{-1}} \leq c_1 j^N$}) \\
		& \leq C N j^{N+1}\|\mathbf{1}_{B(0,1)} V_1^{\frac{1}{p}}(-\Delta)^{-\frac{1}{2}} |u|\mathbf{1}_{2j^{-1}}\|_{p} = \dots_1
	\end{align*}
	We now apply the estimate $\mathbf{1}_{B(0,1)}(x)(-\Delta)^{-\frac{1}{2}}(x,y)\mathbf{1}_{B(0,1)}(y) \leq C_0\mathbf{1}_{B(0,1)}(x)(1-\Delta)^{-\frac{1}{2}}(x,y)\mathbf{1}_{B(0,1)}(y)$ for appropriate $C_0>1$. Strictly speaking, the latter is not necessary, but we will make the argument somewhat shorter by passing to a Bessel potential. So, we continue: for $2 \leq s<p$ if $p>2$ or $s=2$ if $p=2$, and $q=\frac{dp}{d+(1-2s^{-1})p}$,
	\begin{align}
		\dots_1 & \leq CC_0 N j^{N+1}\|\mathbf{1}_{B(0,1)} V_1^{\frac{1}{p}}(1-\Delta)^{-\frac{1}{2}} |u|\mathbf{1}_{2j^{-1}}\|_{p} \notag \\
		& \leq C C_0 N j^{N+1}\|\mathbf{1}_{B(0,1)} V_1^{\frac{1}{p}}(1-\Delta)^{-\frac{1}{s}}\|_{p \rightarrow p}\|(1-\Delta)^{-\frac{1}{2}+\frac{1}{s}}\|_{q \rightarrow p}\|u\mathbf{1}_{2j^{-1}}\|_{q} \tag{$\ast$} \label{ast}
		\\
		& (\text{we are applying $\|\mathbf{1}_{B(0,1)} V_1^{\frac{1}{p}}(1-\Delta)^{-\frac{1}{s}}\|_{p \rightarrow p} <\infty$ \cite[Theorem 6.1]{LS}}) \notag \\
		& \leq C' N j^{N+1}\|u\mathbf{1}_{2j^{-1}}\|_{q}=\dots_2 \notag
	\end{align}
	It is seen that $q$ can be chosen to be arbitrarily close $\frac{dp}{d+p-2}$ (by selecting $s$ close to $p$).
	Thus, applying H\"{o}lder's inequality, we obtain, for every $\epsilon>0$,
	\begin{align*}
		\dots_2  & \leq C' N j^{N+1}\|\mathbf{1}_{2j^{-1}}\|_{\frac{dp}{p-2}+\epsilon}\|u\mathbf{1}_{2j^{-1}}\|_p \\
		& \leq C''Nj^{N+1-\frac{p-2}{p}+\epsilon'}\|u\mathbf{1}_{2j^{-1}}\|_p \quad (\text{$\epsilon'$ is as small as needed}) \\
		& \leq C''N \bigl(j^{p(N+1)-p+2+\epsilon''}\big\langle |u|^p\mathbf{1}_{2j^{-1}}\big\rangle  \bigr)^{\frac{1}{p}}  \quad (\text{$\epsilon''$ is as small as needed}).
	\end{align*}
	Recalling that $u$ vanishes in $L^p$ at least to order $\beta$, and that $N=[\beta]+1$, we obtain that the last term in the previous formula tends to $0$ as $j \rightarrow \infty$ provided that $p([\beta]+2)-p+2<d+p\beta$, i.e.\,$p([\beta]-\beta)+p+2<d$. (Recall $d=3$.) Since the last condition is satisfied by the assumptions of the theorem, we can make $\|K_E\|_p$ as small as needed by selecting $j$ sufficiently large. (Let us note that if $p([\beta]+2)-p+2>d+p\beta$, then we cannot exclude the possibility that $\|K_E\|_p \rightarrow \infty$ as $j \rightarrow \infty$.)
	
	\smallskip
	
	Next, recall that $\tilde{E}_j(u):=- u\xi_j\Delta \eta -2\nabla u \cdot (\nabla\eta)\xi_j + u\eta\Delta \xi_j$.
	
	3) By \eqref{e1},
	\begin{align*}
		\|J_{\tilde{E}}\|_p \leq A_1+A_2+A_3,
	\end{align*}
	where
	\begin{align*}
		A_1& :=C_1\|\mathbf{1}_{B(0,1)} V_1^{\frac{1}{p}}(-\Delta)^{-1}\varphi_N |u|\xi_j|\Delta \eta|\|_p \\
		& \leq C_1C_0c^2  \|\mathbf{1}_{B(0,1)} V_1^{\frac{1}{p}}(1-\Delta)^{-1}|u|\mathbf{1}_{B(0,3)}\|_p \\
		& (\text{we represent $(1-\Delta)^{-1}=(1-\Delta)^{-\frac{1}{s}} (1-\Delta)^{-1+\frac{1}{s}}$ and argue as in \eqref{ast}}) \\
		& \leq K_2\|\mathbf{1}_{B(0,3)}u\|_p,
	\end{align*}
	and
	\begin{align*}
		A_2 & :=2C_1\|\mathbf{1}_{B(0,1)} V_1^{\frac{1}{p}}(-\Delta)^{-1}\varphi_N |\nabla u| |\nabla\eta|\xi_j\|_p \\
		& \leq 2C_1C_0c \|\mathbf{1}_{B(0,1)} V_1^{\frac{1}{p}}(1-\Delta)^{-1}|\nabla u| \mathbf{1}_{B(0,3)}\|_p \\
		& \leq K_3\|\mathbf{1}_{B(0,3)}\nabla u \|_r, \quad r=\frac{dp}{d+2p-2}+\epsilon, 
	\end{align*}
	where $\epsilon>0$ can be chosen arbitrarily small. (It is easy to see $\|\mathbf{1}_{B(0,3)}\nabla u \|_r$ is finite: since $d=3$, $r=\frac{3p}{2p+1}+\epsilon<2$, so H\"{o}lder's inequality and $u \in W^{1,2}$ immediately yield the conclusion.)

	Finally, we take care of $A_3$. (This is the second place where we take into account that $u$ vanishes at least to order $\beta$ at $x=0$.)
	\begin{align*}
		A_3:=&C_1\|\mathbf{1}_{B(0,1)} V_1^{\frac{1}{p}}(-\Delta)^{-1}\varphi_N |u|\eta|\Delta \xi_j|\|_p \\
		& \leq C_1C_0 c^2 j^{N+2}\|\mathbf{1}_{B(0,1)} V_1^{\frac{1}{p}}(1-\Delta)^{-1}|u|\mathbf{1}_{2j^{-1}}\|_p \\
		& (\text{again, we write $(-\Delta)^{-1}=(-\Delta)^{-\frac{1}{s}} (-\Delta)^{-1+\frac{1}{s}}$ and argue as in \eqref{ast}}) \\
		& \leq C'j^{N+2-\frac{2p-2}{p}+\epsilon}\|u\mathbf{1}_{2j^{-1}}\|_p \quad (\text{$\epsilon$ is as small as needed}).
	\end{align*}
	Since $u$ vanishes in $L^p$ at least to order $\beta$ at $x=0$, we have $j^{N+2-\frac{2p-2}{p}+\epsilon}\|u\mathbf{1}_{2j^{-1}}\|_p \rightarrow 0$ as $j\rightarrow \infty$ provided that $p([\beta]+3)-2p+2<d+p\beta$. That is, $p([\beta]-\beta)+p+2<d$, i.e.\,we arrive at the same condition as above. Hence $A_3$ can be made as small as needed by selecting $j$ sufficiently large.

	Applying the above estimates to \eqref{main_ineq}, we thus obtain
	\begin{align*}
		(1-C_1\kappa_p\nu)\|I\|_p & \leq K_1\|\mathbf{1}_{B(0,3)} V_1^{\frac{1}{p}}u\|_p \\
		& + K_2\|\mathbf{1}_{B(0,3)}u\|_p + K_3\|\mathbf{1}_{B(0,3)}\nabla u \|_r \\
		&+ \text{ terms that vanish as $j \rightarrow \infty$}.
	\end{align*}
	Taking $j \rightarrow \infty$, we have
	\begin{align*}
		\|\mathbf{1}_{B(0,1)} V_1^{\frac{1}{p}}\varphi_N  u \|_p \leq \tilde{K}, 
	\end{align*}
	where $\tilde{K}:=K_1\|\mathbf{1}_{B(0,3)} V_1^{\frac{1}{p}}u\|_p  + K_2\|\mathbf{1}_{B(0,3)}u\|_p + K_3\|\mathbf{1}_{B(0,3)}\nabla u \|_r$.
	Hence $\|\mathbf{1}_{B(0,1)} \varphi_N  u \|_p \leq \tilde{K}$ (recall $N=[\beta]+1$).

It remains  to show that $\tilde{K} \leq K$ for some $K=K(\|f\|_p,\|f\|_2,\nu)$. Indeed, in view of \eqref{Vp}, the first two terms in $\tilde{K}$ are bounded from above by $K_4\|f\|_p$. Next, since $r<2$, $\|\mathbf{1}_{B(0,3)}\nabla u \|_r \leq \|\nabla u\|_2 \leq K_5\|f\|_2$, where the last inequality is valid by the construction of the form-sum.
		\hfill \qed

	\bigskip

	\begin{remark}
	\label{proof_d}
	Here we comment on the proof of the result in Remark \ref{rem_d} ($d \geq 4$).
	Following the proof of the SUC in \cite{KSh}, we arrive at the error term  $\|\mathbf{1}_{2/j\setminus 1/j}\varphi_{N_\delta}(-2\nabla\xi_j\cdot\nabla u) \|_q$, $q=\frac{2d}{d+2}$, which is estimated using the Gagliardo-Nirenberg inequality
	$$
		\|\mathbf{1}_{B(0,2/j)- B(0,1/j)}\varphi_{N_\delta}(-2\nabla\xi_j\cdot\nabla u) \|_p\leq j^{N_\delta+1}\|\mathbf{1}_{2/j}\nabla u \|_p\leq C j^{N_\delta+1}\|\mathbf{1}_{2/j}\Delta u \|_{\frac{2d}{d+4}} + Cj^{N_\delta+1}\|\mathbf{1}_{2/j} u \|_2,
	$$
	where $N_\delta:=[\beta]+1+\left(\frac{d}{2}-\gamma\right)\frac{d-3}{d-1}$ for any $0<\gamma<\frac{1}{2}$.
	The latter term vanishes as $j\rightarrow\infty$ since $2([\beta]-\beta)+4+2\left(\frac{d}{2}-\gamma\right)\frac{d-3}{d-1}<d$ (indeed, by our assumption $[\beta]-\beta+2+\bigl(\frac{d}{2}-\gamma\bigr)\frac{d-3}{d-1} <\frac{d}{s}$ for $\gamma$ sufficiently close to $\frac{1}{2}$). For the former term, we estimate
	$$
	\|\mathbf{1}_{2/j}\Delta u \|_{\frac{2d}{d+4}} \leq \|\mathbf{1}_{2/j}V_1 u \|_{\frac{2d}{d+4}}\leq \|\mathbf{1}_{2/j}V_1\|_{\frac{d-1}{2}}\|\mathbf{1}_{2/j} u\|_s, \quad s=2\frac{d(d-1)}{d^2-d-4},
	$$
	where $j^{N_\delta+1}\|\mathbf{1}_{2/j} u\|_s \rightarrow 0$ as $j \rightarrow \infty$ by our assumption on the order of vanishing of $u$, i.e.\,$[\beta]-\beta+2+\bigl(\frac{d}{2}-\gamma\bigr)\frac{d-3}{d-1} <\frac{d}{s}$.

	\end{remark}

	\section{Further discussion}
	
	\label{discussion_sect}
	
	1.~Repeating the proof of Theorem \ref{thm1} for solutions to the differential inequality \eqref{diff_ineq} below, we obtain the following strong unique continuation result, strengthening the corresponding result in \cite{KSh} (i.e.\,removing the extra assumption $V \in L^{1+\varepsilon}_{\loc}$ for some $\varepsilon>0$).

	\begin{theorem}\label{cor1} Let $d=3$. Assume that $V \in L^1_{\loc}$ satisfies 
			\begin{equation}
			\label{unif_v}
			\sup_{x \in \mathbb R^d}\|\mathbf{1}_{B(x,\rho)}|V|^{\frac{1}{2}}(-\Delta)^{-\frac{1}{2}}\|_{2 \rightarrow 2} \leq \nu
		\end{equation}
		for a sufficiently small $\nu$.
		Then any solution 
		$$u \in Y^{\rm str}_V:=\{f \in L^1_{\loc} \mid \Delta f \in L^1_{\loc}, |V|^{\frac{1}{2}}f \in L^2_{\loc}, |\nabla f| \in L^{\frac{6}{5}}\}$$ to the differential inequality
		\begin{equation}
			\label{diff_ineq}
			\tag{$\star\star$}
			|\Delta u| \leq |Vu| \quad \text{a.e.\,in } \mathbb R^3
		\end{equation}
		that vanishes in $L^2$ to infinite order at some point is, in fact, identically equal to zero on $\mathbb R^3$.
	\end{theorem}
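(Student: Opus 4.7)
The plan is to rerun the proof of Theorem~\ref{thm1} in the setting of the differential inequality $|\Delta u|\leq |Vu|$, with $p=2$ and no source term $f$, and then to use the uniform local form-boundedness \eqref{unif_v} to propagate. After translation, assume $u$ vanishes in $L^2$ to infinite order at $x=0$. Given an integer $n\geq 1$, pick $\beta_n:=n+\tfrac34$: then $[\beta_n]=n$, the substantial-vanishing condition $p([\beta_n]+1-\beta_n)=\tfrac12<1$ holds, and of course $\beta_n\leq\ord^2_{x=0}u=+\infty$. Setting $u_j=\eta_j u$, representing $u_j=(-\Delta)^{-1}(-\Delta u_j)$, subtracting the degree $N-1$ Taylor polynomial at $x=0$ (with $N:=[\beta_n]+1=n+1$), and multiplying by $\mathbf 1_{B(0,1)}|V|^{1/2}\varphi_N$ produces $I\leq J+J^c+J_E+J_{\tilde E}$ exactly as in the proof of Theorem~\ref{thm1}. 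Using $\eta_j|\Delta u|\leq\eta_j|V||u|$ directly (no $f$ to handle) and Proposition~\ref{prop_p} with $\kappa_2=1$, the main term satisfies $\|J\|_2\leq C_1\nu\|I\|_2$, which is absorbed provided $\nu$ is chosen so that $C_1\nu<1$.

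The error terms $\|J_E\|_2$ and $\|J_{\tilde E}\|_2$ tend to $0$ as $j\to\infty$ by the same computation as in the proof of Theorem~\ref{thm1}: the admissibility condition $p([\beta_n]-\beta_n)+p+2 = 5/2 < 3 = d$ holds for every $n$, and the infinite-order vanishing of $u$ at $0$ then kills the factors $j^{N+c}\|u\mathbf 1_{B(0,2/j)}\|$ appearing in the error bounds. The surviving term $\|J^c\|_2$ is controlled by $\|\mathbf 1_{B(0,3)}|V|^{1/2}u\|_2$, $\|\mathbf 1_{B(0,3)}u\|_2$ and $\|\mathbf 1_{B(0,3)}\nabla u\|_{6/5}$, all finite since $u\in Y^{\rm str}_V$, and crucially all independent of $n$. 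Passing $j\to\infty$ yields
\begin{align*}
\bigl\| \, |\cdot|^{-(n+1)} u \, \bigr\|_{L^2(B(0,1))} \leq K \qquad \text{for every } n\geq 1,
\end{align*}
with $K$ independent of $n$. If $u$ were not zero a.e.\,on $B(0,1)$, there would exist $r_0\in(0,1)$ with $\|u\|_{L^2(B(0,r_0))}>0$, giving $r_0^{-(n+1)}\|u\|_{L^2(B(0,r_0))}\leq K$ for every $n$ — impossible since $r_0<1$. Hence $u\equiv 0$ on $B(0,1)$.

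To propagate $u\equiv 0$ from $B(0,1)$ to $\mathbb R^3$, the uniform hypothesis \eqref{unif_v} enters: the preceding argument applies verbatim centred at any point $x^*\in\mathbb R^3$ (after rescaling to match the radius $\rho$), producing a fixed $\rho_0>0$ (depending only on $\nu$ and $\rho$) such that whenever $u$ vanishes in $L^2$ to infinite order at $x^*$, then $u\equiv 0$ on $B(x^*,\rho_0)$. Set $R:=\sup\{r>0 : u\equiv 0\text{ on }B(0,r)\}\geq \rho_0$ and assume for contradiction that $R<\infty$: pick $x^*\in\partial B(0,R)$ and the point $y$ on the segment $[0,x^*]$ with $|y-x^*|=\rho_0/2$; then $y\in B(0,R)$, so $u\equiv 0$ on a neighbourhood of $y$, hence $u$ vanishes to infinite order at $y$, hence $u\equiv 0$ on $B(y,\rho_0)$, a ball that strictly contains points outside $\bar B(0,R)$ — contradicting maximality of $R$. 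Therefore $R=+\infty$ and $u\equiv 0$ on $\mathbb R^3$.

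The main thing to verify is that the constant $K$ is genuinely independent of $n$: the factor $N$ coming from Sawyer's estimate \eqref{e2} must appear only inside the error terms (where it is absorbed into the $j\to\infty$ limit via infinite-order vanishing) and never in the final right-hand side. A secondary technical point is that $u\in Y^{\rm str}_V$ only guarantees $|\nabla u|\in L^{6/5}$, which is the boundary case $r=\frac{dp}{d+2p-2}=6/5$ of the $A_2$-estimate in the proof of Theorem~\ref{thm1} (where $r$ was taken strictly above $6/5$); this endpoint requires a mild modification (for instance, using a Bessel-potential duality trick or a slightly different split of $(1-\Delta)^{-1}$) but no new ideas.
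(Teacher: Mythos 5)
Your proposal is correct and unpacks exactly what the paper intends: the paper's entire proof is the single sentence ``Repeating the proof of Theorem \ref{thm1} for solutions to the differential inequality\dots,'' and you carry out precisely that repetition, with $p=2$, $\kappa_2=1$, the choice $\beta_n=n+\tfrac34$ (so the admissibility condition $p([\beta_n]-\beta_n)+p+2=\tfrac52<3$ holds for all $n$), the $\beta$-uniform constant $K$ coming from the surviving terms, and a chaining step powered by the uniform local form-bound \eqref{unif_v}. Two small points worth noting: your attribution is slightly off in that $\|J^c\|_2$ contributes only $K_1\|\mathbf 1_{B(0,3)}V_1^{1/2}u\|_2$, while the other two surviving pieces ($K_2\|\mathbf 1_{B(0,3)}u\|_2$ and $K_3\|\mathbf 1_{B(0,3)}\nabla u\|_r$) come from $J_{\tilde E}$ (specifically $A_1$ and $A_2$); and the $r=6/5$ endpoint you flag is in fact not a genuine obstruction, since the split $(1-\Delta)^{-1}=(1-\Delta)^{-1/2}(1-\Delta)^{-1/2}$ (the $s=2$ case of the paper's scheme, with no $\epsilon$) combined with the critical Sobolev/HLS bound $(1-\Delta)^{-1/2}:L^{6/5}(\mathbb R^3)\to L^2(\mathbb R^3)$ and the local form-boundedness handles $A_2$ directly for $u\in Y^{\rm str}_V$. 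Also, in the chaining step, the statement ``$u\equiv 0$ on $B(y,\rho_0)$ for a single $y$'' does not by itself contradict maximality of $R$; you should take the union of such balls over $x^*\in\partial B(0,R)$ (or argue that $\{x:u\equiv 0\text{ near }x\}$ is open, nonempty, and closed in the connected set $\mathbb R^3$), which is a routine fix.
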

	
	The space of solutions $Y^{\rm str}_V$ is large enough to contain the eigenfunctions of the Schr\"{o}dinger operator $H$, see \cite{KSh} for details.
	
	\medskip
	
	2.~As a consequence of \eqref{est_m} in Theorem \ref{thm1}, we have for any $0<a<1$
				$$\ord_{x=0}^pu \leq \log_{1/a}\frac{K}{\|\mathbf{1}_{B(0,a)}u\|_p} + 1.$$
			 Indeed, we select $\beta$ such that $\beta \leq \ord_{x=0}^pu \leq \beta+1$ and note that by \eqref{est_m}
	$$
	\|\mathbf{1}_{B(0,a)}a^{-[\beta]-1}u\|_p \leq K,
	$$
	and so
	$$
	(1/a)^{[\beta]+1}\|\mathbf{1}_{B(0,a)}u\|_p \leq K \quad \Rightarrow \quad [\beta]+1 \leq \log_{1/a} \frac{K}{\|\mathbf{1}_{B(0,a)}u\|_p},
	$$
	as required.

\medskip
	
	3.~In \cite{MSS, MNS} the authors consider operators $\Lambda=-\Delta - b \cdot \nabla$, $\Lambda^*=-\Delta + \nabla \cdot b$, where $b(x)=\sqrt{\delta}\frac{d-2}{2}|x|^{-2}x$ is the Hardy-type drift, and establish two-sided bounds on their Green functions:
	$$
	(\mu+\Lambda)^{-1}(x,y) \simeq e^{-\sqrt{\mu}|x-y|}|x-y|^{-d+2}\biggl[1 \wedge \frac{|x||y|}{|x-y|^2} \biggr]^{\frac{\gamma}{2}}|x|^{-\frac{\gamma}{2}}|y|^\frac{\gamma}{2}, \qquad \gamma:=\sqrt{\delta}\frac{d-2}{2},
	$$
	$$
	(\mu+\Lambda^*)^{-1}(x,y) \simeq e^{-\sqrt{\mu}|x-y|}|x-y|^{-d+2}\biggl[1 \wedge \frac{|x||y|}{|x-y|^2} \biggr]^{\frac{\gamma}{2}}|x|^{\frac{\gamma}{2}}|y|^{-\frac{\gamma}{2}}.
	$$		
Since we are interested in the vanishing of the Green function in $x$, is the operator $\Lambda^*$ that is of interest to us. The proof of estimate \eqref{est_m} in Theorem \ref{thm1} extends to this operator easily (use \eqref{e2} instead of \eqref{e1}), although with the assumption $\nu$ dependent on the order of vanishing.

The phenomenon of vanishing of Green's function exists in the non-local setting, see  \cite{BGJP, CKSV, JW} (fractional Laplacian $(-\Delta)^{\frac{\alpha}{2}}$ with Hardy potential $c|x|^{-\alpha}$) and \cite{KS, KSS} (Hardy-type drift  $c \nabla \cdot |x|^{-\alpha}x$), and so the question addressed in this paper is also of interest for the operators considered in these papers. 

	\appendix
	
	\bigskip

	\section{Proof of \eqref{bd_0}}
	
	\label{appA}

	Set $|x|_\varepsilon:=\sqrt{|x|^2+\varepsilon}$ for $\varepsilon>0$ and
	$$
	V_\varepsilon(x):=\delta\frac{(d-2)^2}{4}|x|_\varepsilon^{-2} + V_{0,\varepsilon},
	$$
	where $V_{0,\varepsilon} \in C_c^\infty$ is such that $|V_{0,\varepsilon}| \uparrow |V|$ as $\varepsilon \downarrow 0$, so that $V_{0,\varepsilon} \in \mathbf{F}_{\delta_0}$ with the same constants $\delta_0$, $\lambda_0=\lambda(\delta_0)$. Such $V_{0,\varepsilon}$ can easily be constructed by truncating and mollifying $V_0$.
	Let
	$$
	H^\varepsilon:=-\Delta + V_\varepsilon, \quad D(H^\varepsilon)=W^{2,2},
	$$
	$$
	u \equiv u_\varepsilon :=(\mu + H^\varepsilon)^{-1}f, \quad 0 \leq f \in L^1 \cap L^\infty, f \not \equiv 0, f=0 \text{ in $B(0,1)$},
	$$
	$$
	\psi(x) \equiv \psi_\varepsilon(x):=|x|_{\varepsilon}^{-s}, \quad 0 \leq s<\frac{d-2}{2}\sqrt{1+\delta-\delta_0}, \qquad v:=\psi u.
	$$
	Then 
	$v$ satisfies
	$$
	(\mu + H^\varepsilon_\psi)v=\psi f, \quad \text{ where } H^\varepsilon_\psi:=-\Delta + 2 \frac{\nabla\psi}{\psi} \cdot \nabla + \frac{\Delta \psi}{\psi} - 2\frac{(\nabla \psi)^2}{\psi^2} + V_\varepsilon, \quad D(H^\varepsilon_\psi)=W^{2,2}.
	$$
	We will show below that $H^\varepsilon_\psi$
	satisfies 
	\begin{equation}
		\label{S}
		\|\nabla v\|_{2} \leq c\|\psi f\|_2, \quad \mu>\delta_0\lambda_0.
	\end{equation}
	This inequality yields $\|\psi u\|_{\frac{2d}{d-2}} \leq C_Sc\|\psi f\|_2<\infty$, so
	$$
	\|\mathbf{1}_{B(0,1)}|x|_\varepsilon^{-\frac{k}{p}}u_\varepsilon\|_{p}<\infty, \quad \text{ where } p=\frac{2d}{d-2}, \frac{k}{p}=s.
	$$
	Now, using the standard convergence result $(\mu+H)^{-1}=s{\mbox-}L^2{\mbox-}\lim_{\varepsilon \downarrow 0}(\mu+H^\varepsilon)^{-1}$ and applying Fatou's Lemma, we obtain $\|\mathbf{1}_{B(0,1)}|x|^{-\frac{k}{p}}u\|_{p}<\infty$.
	Comparing the latter with our hypothesis on $s$, we obtain that $\Ord_{x=0}^{p}u \geq \frac{d-2}{2}(\sqrt{1+\delta-\delta_0}-1)$, as claimed.
	
	The inequality \eqref{S} will follow from $\mu \langle w,w\rangle + \Real\langle H^\varepsilon_\psi w,w\rangle \geq (\mu-\delta_0\lambda_0)\|w\|_2^2 + c\|\nabla w\|^2_{2}$, $c>0$, applied to $w=\Real(v)$.
	By integration by parts,
	$$
	2\Real\langle \frac{\nabla\psi}{\psi} \cdot \nabla w,w \rangle \equiv -2s\Real\langle |x|_\varepsilon^{-2}x\cdot \nabla w,w\rangle = s(d-2)\langle |x|_\varepsilon^{-2}w,w\rangle + 2s\varepsilon \langle |x|_\varepsilon^{-4}w,w\rangle.
	$$
	Thus, using Hardy's inequality $\langle \nabla w,\nabla w\rangle \geq \frac{(d-2)^2}{4}\langle |x|^{-2}w,w\rangle \geq \frac{(d-2)^2}{4}\langle |x|_\varepsilon^{-2}w,w\rangle$, we obtain
	\begin{align*}
		\Real\langle H^\varepsilon_\psi w,w\rangle & \geq  \langle \nabla w,\nabla w\rangle + \left [ -s^2 + \delta\frac{(d-2)^2}{4}\right] \langle |x|_\varepsilon^{-2}w,w\rangle + s^2\varepsilon \langle |x|_\varepsilon^{-4}w,w\rangle  + \langle V_0w,w\rangle \\
		& (\text{we discard the term $s^2\varepsilon \langle |x|_\varepsilon^{-4}w,w\rangle$ and use $V_0 \in \mathbf{F}_{\delta_0}$}) \\
		& \geq (1-\delta_0)\langle \nabla w,\nabla w\rangle + \left [ -s^2 + \delta\frac{(d-2)^2}{4}\right] \langle |x|_\varepsilon^{-2}w,w\rangle - \delta_0\lambda_0\langle w,w\rangle \\
		& \geq c\langle \nabla w,\nabla w\rangle + (1-\delta_0-c)\frac{(d-2)^2}{4}\langle |x|_\varepsilon^{-2}w,w\rangle \\
		& + \left [ -s^2 + \delta\frac{(d-2)^2}{4}\right] \langle |x|_\varepsilon^{-2}w,w\rangle - \delta_0\lambda_0\langle w,w\rangle
	\end{align*}
	where $c$ is chosen sufficiently small so that $1+\delta-\delta_0-c>s^2\frac{4}{(d-2)^2}$, using the fact that our assumption on $s$ is a strict inequality. Then $\Real\langle H^\varepsilon_\psi w,w\rangle \geq c\langle \nabla w,\nabla w\rangle - \delta_0\lambda_0\langle w,w\rangle$, as needed.

\end{document}